\newtheorem{thm}{Theorem}[section]
\newtheorem{prop}[thm]{Proposition}
\newtheorem{lem}[thm]{Lemma}
\newtheorem{deff}[thm]{Definition}
\newtheorem{rem}[thm]{Remark}
\newtheoremstyle{note}{3pt}{3pt}{\rm}{}
{\bf}{.}{.5em}{}
\theoremstyle{note}
\newcommand{\R}{\mathbb R}
\title{Simultaneously Non-convergent Frequencies of Words in Different Expansions}
\author{David F\"arm\\
\scriptsize{Centre for Mathematical Sciences}\\
\scriptsize{Lund University, Sweden}\\
\scriptsize{david@maths.lth.se}}
\begin{document}

\maketitle

\begin{abstract}
\noindent We consider expanding maps such that the unit interval can be represented as a full symbolic shift space with bounded distortion.  There are already theorems about the Hausdorff dimension for sets defined by the set of accumulation points for the frequencies of words in one symbolic space at a time. It is shown in this text that the dimension is preserved when sets defined using different maps are intersected. More precisely, it is proven that the dimension of any countable intersection of sets defined by their sets of accumulation for frequencies of words in different expansions, has dimension equal to the infimum of the dimensions of the sets that are intersected. As a consequence, the set of numbers for which the frequencies do not exist has full dimension even after countable intersections. We prove these results also for a dense set of $\beta$-shifts. 
\end{abstract}

\section{Introduction}\label{introduction}

\subsection{Expanding maps generating full shifts} \label{fullshift}

Let $f\colon [0,1)\mapsto [0,1)$ be such that $[0,1)$ can be split into a finite number $g_f$ of intervals $[a,b)$ such that $f|_{[a,b)}$ is monotone and onto for each of these intervals. We take an enumeration of the intervals and associate each interval to the corresponding number so that we can refer to an interval as $[n]$ where $n$ is the appropriate number. Assume that for each of the intervals $[a,b)$ it holds that  $|f(x)-f(y)|\geq |x-y|$ for all $x,y\in [a,b)$. Then we can define cylinders 
\[
C_{x_1 \dots x_n}:=\Big\{\, x\in [0,1):\bigcap_{i=1}^n f^{-(i-1)}(x)\in [x_i]  \, \Big\},
\]
where $C_{x_1 \dots x_n}$ is called a generation $n$ cylinder. We will consider $[0,1)$ as a generation $0$ cylinder. Assume that $\lim_{n\to \infty}|C_{x_1 \dots x_n}|= 0$ for all 
\[
(x_i)_{i=1}^\infty \in \Sigma_{g_f}:=\{0,1,\dots, g_f-1\}^{\mathbb N}.
\]
Then we have a unique correspondence between points $x\in [0,1]$ and sequences $(x_i)_{i=1}^\infty \in \Sigma_{g_f}$.

For a given integer $m>0$ we consider words $w= i_1,\dots,i_m$ of length $m$ in the alphabet $\{0,\dots,g_f-1\}$. We can enumerate these words as $\{w_j\}_{j=1}^{g_f^m}$. For any number $x\in[0,1]$ and $n>m$, let 
\[
\tau_{w_j}^f(x,n)=\#\big\{\, i\in\{1,\dots,n-m\}: x_i,\dots,x_{i+m-1}=w_j \, \big\}
\]
and 
\begin{align}
G_{\bar p}^{f,m}=\left\{\, x:\frac{\tau_{w_j}^f(x,n)}{n-m}\to p_{w_j}, \quad n\to\infty,\quad  j=1, \dots, g_f^m\, \right\}\label{defofg},
\end{align}
where $\bar p=(p_{w_1},\dots p_{w_{g_f^m}})$ such that $0\leq p_{w_i}\leq 1$ for all $i$ and $\sum_{i=1}^{g_f^m}p_{w_i}=1$. Here $\bar p$ can be interpreted as the frequencies with which the words of length $m$ occur. Note that for many $x\in [0,1]$, the limit in (\ref{defofg}) does not even exist. Consider the sets
\[
G_{\bar p}^{f,m}(n,\epsilon)=\left\{\, x\in [0,1): p_{w_j}-\epsilon< \frac{\tau_{w_j}^f(x,n)}{n-m}<p_{w_j}+\epsilon, \forall j\, \right\}.
\] 
Note that $G_{\bar p}^{f,m}(n,\epsilon)$ is the union of of all generation $n$ cylinders for which the frequencies of words of length $m$ in the finite sequence determining the cylinder is $\epsilon$ close to $\bar p$. If a point $x$ is at the left endpoint of one of these cylinders, then $\frac{\tau_{w}^f(x,n)}{n-m}\to 0$ for all words $w$ of length $m$ except $0^m$. Thus, if $p_w>\epsilon$ for some word $w\neq 0^m$, then $x$ cannot be in $G_{\bar p}^{f,m}(n+N,\epsilon)$
when $N$ gets too large. This leads to the following conclusion.

\begin{rem}\label{gopen}
Let $m\in \mathbb N$ and $\epsilon>0$. If $\bar p$ is such that for some word $w\neq 0^m$ of length $m$ we have $p_w>\epsilon$, then the set 
\[
\bigcap_{N=1}^\infty \bigcup_{n=N}^\infty  G_{\bar p}^{f,m}(n,\epsilon)
\]
is a $G_\delta$ set.
\end{rem}

Note also that
\[
G_{\bar p}^{f,m} \subset \bigcap_{N=1}^\infty \bigcup_{n=N}^\infty  G_{\bar p}^{f,m}(n,\epsilon)
\]
for all $\epsilon >0$.

\begin{deff}
A number $x\in [0,1]$ is $m$-normal to $f$ if for all words $w$ of length $m$ we have $\frac{\tau^f_w(x,n)}{n-m}\to |C_w|$ as $n \to \infty$, where $|C_w|$ denotes the length of the set $C_w$.
\end{deff}

It is easy to see that if 
\[
x\in \bigcap_{N=1}^\infty \bigcup _{n=N}^\infty G_{\bar p}^{f,m}(n,\epsilon),
\]
where $0<\epsilon<||C_{w_j}|-p_{w_j}|$ for some $j$, then the frequency of the word $w_j$ of length $m$ in the expansion of $x$ cannot be $|C_{w_j}|$. Thus, $x$ is not $m$-normal to $f$. In fact, the expressions for the frequencies in (\ref{defofg}) do not even have to converge. For $x$ to be in $\limsup_{n\to \infty} G_{\bar p}^{f,m}(n,\epsilon)$, it is sufficient that the vector $\bar p$ is a point of accumulation for these expressions.

Let $A^{f,m}(x)$ denote the set of points of accumulation in $[0,1]^{g_f^m}$ for the frequencies $\Big(\frac{\tau_{w_i}^f(x,n)}{n-m}\Big)_{i=1}^{g_f^m}$ as $n\to \infty$, where $(w_i)_{i=1}^{g_f^m}$ is an enumeration of the words of length $m$. We will need the following conditions on $f$.

\begin{enumerate}
\item[(i)]
Assume that we have bounded distortion, \mbox{i.e.} that there is a constant $K_f>0$ such that for any cylinder $C_{x_1 \dots x_n}$, including $[0,1)$, it holds that 
\[
\frac{|(f^n)'(y)|}{|(f^n)'(z)|}<K_f
\]
for all $y,z\in C_{x_1 \dots x_n}$, which implies that
\[
\frac{|C_{x_1\dots x_{n+1}}|}{|C_{x_1 \dots x_n}|}>\frac{1}{g_f K_f}
\]
for all sequences $(x_i)_{i=1}^\infty \in \Sigma_{g_f}$ and all $n \in \mathbb N$.

\item[(ii)]
Given $f$ and $m\in \mathbb N$ there is a vector $\bar p$ such that $\dim_H(G_{\bar p}^{f,m})=1$. Assume that $f$ is such that for each word $w$ of length $m$ there is a vector $\bar q$ such that $q_w\neq p_w$, for which  $\dim_H(G_{\bar q}^{f,m})$ is arbitrarily close to $1$.

\end{enumerate}

We will prove the following theorems.

\begin{thm}\label{nonnormalthm}
Let $(f_i)_{i=1}^\infty$ be a sequence of functions satisfying conditions $(i)$ and $(ii)$. Then the set of numbers that are not $m$-normal to any of these $f_i$ for any $m$ has Hausdorff dimension $1$. 
\end{thm}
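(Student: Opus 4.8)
The plan is to establish the lower bound $\dim_H \mathcal N \ge 1-\eta$ for every $\eta>0$, where $\mathcal N$ denotes the set of numbers that are not $m$-normal to any $f_i$ for any $m$; since $\dim_H \mathcal N\le 1$ is automatic, this gives the theorem. Fix $\eta>0$ and enumerate the countably many pairs $(i,m)$ as $(i_k,m_k)$, $k=1,2,\dots$, so that every pair occurs infinitely often. For each $k$ I would invoke condition (ii) to select a frequency vector $\bar q^{(k)}$ for the map $f_{i_k}$ and word length $m_k$ with $q^{(k)}_w\ne |C_w|$ for some word $w$ of length $m_k$ and with $\dim_H G^{f_{i_k},m_k}_{\bar q^{(k)}}\ge 1-\eta$ (possible since (ii) gives dimension arbitrarily close to $1$, uniformly in the pair), and fix $\epsilon_k>0$ small enough that $(q^{(k)}_w-\epsilon_k,\,q^{(k)}_w+\epsilon_k)$ excludes $|C_w|$.

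Next I would isolate the correct robust object. By Remark \ref{gopen} and the inclusion noted immediately after it, the set $E_k:=\bigcap_{N}\bigcup_{n\ge N} G^{f_{i_k},m_k}_{\bar q^{(k)}}(n,\epsilon_k)$ is a $G_\delta$ set containing $G^{f_{i_k},m_k}_{\bar q^{(k)}}$, hence $\dim_H E_k\ge 1-\eta$, and any $x\in E_k$ has its length-$m_k$ frequency vector for $f_{i_k}$ within $\epsilon_k$ of $\bar q^{(k)}$ for infinitely many scales $n$; since $q^{(k)}_w\ne|C_w|$ this keeps the frequency of $w$ bounded away from $|C_w|$ along a subsequence, so $x$ is not $m_k$-normal to $f_{i_k}$. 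The conceptual point is that only \emph{recurrence} near an off-normal vector, not convergence, is needed for non-normality: simultaneous convergence of frequencies for all maps would be mutually impossible, but recurrence near different off-normal targets along different subsequences for different maps is compatible. Hence $\bigcap_k E_k\subset\mathcal N$, and it remains to prove $\dim_H\bigcap_k E_k\ge 1-\eta$.

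The core of the argument is a Moran-type construction realizing all constraints at once. I would build a nested family $F_0\supset F_1\supset\cdots$ of finite unions of intervals, cycling through the stages $k$: inside each interval surviving from stage $k-1$ I select those $f_{i_k}$-cylinders, carried to a high generation $n_k$, that lie in the interval and whose length-$m_k$ word frequencies are $\epsilon_k$-close to $\bar q^{(k)}$, taking $n_k$ large enough that the uncontrolled initial digits contribute negligibly to the overall frequency and that the surviving family captures dimension near $\dim_H G^{f_{i_k},m_k}_{\bar q^{(k)}}\ge 1-\eta$. Setting $F=\bigcap_k F_k$, every $x\in F$ lands, for each pair $(i,m)$, in infinitely many cylinders whose frequency is $\epsilon$-close to an off-normal vector, so $x\in\bigcap_k E_k\subset\mathcal N$. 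To bound $\dim_H F$ I would place a measure $\mu$ by splitting each interval's mass among its selected descendants in proportion to their lengths and apply the mass distribution principle: condition (i) supplies the uniform ratio bound $|C_{x_1\dots x_{n+1}}|/|C_{x_1\dots x_n}|>1/(g_fK_f)$, which prevents abrupt contraction of cylinders, lets ball estimates be compared to cylinder estimates, and lets the dimension estimates made inside each map's symbolic system be transferred to the common Euclidean metric, yielding $\mu(B(x,r))\le C\,r^{1-\eta}$ and hence $\dim_H F\ge 1-\eta$.

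I expect the main obstacle to be exactly this transfer across incompatible codings: a cylinder of $f_{i_k}$ is not a cylinder of $f_{i_{k+1}}$, so the frequency constraints for different maps cannot be imposed within a single expansion and must instead be nested geometrically, one map per stage. The delicate points are to choose the stage depths $n_k$ so that each stage nearly attains the full dimension $1-\eta$ of its own frequency set while the uncontrolled initial segment stays negligible, and to verify, using only the bounded-distortion bound from (i), that the small dimension deficits incurred when passing from one map's partition to the next do not accumulate over the infinitely many stages. Once this uniformity is in hand we obtain $\dim_H\mathcal N\ge 1-\eta$ for every $\eta>0$, and letting $\eta\to 0$ gives $\dim_H\mathcal N=1$.
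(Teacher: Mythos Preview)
Your outline is conceptually sound but takes a genuinely different route from the paper. The paper does not build a Moran set by hand; instead it shows that for each $f$, $m$, $\bar q$ and small $\epsilon$ the set $\limsup_n G_{\bar q}^{f,m}(n,\epsilon)$ lies in Falconer's large-intersection class $\mathcal G^s$ for every $s\le\dim_H G_{\bar q}^{f,m}$ (Proposition~\ref{maintheorem} and Proposition~\ref{accumulationthm}), and then invokes the fact that $\mathcal G^s$ is closed under countable intersections (Theorem~\ref{intersectionproperty}). Condition~(ii) supplies, for each pair $(f_i,m)$, an off-normal vector with $\dim_H G_{\bar q}^{f_i,m}$ arbitrarily close to $1$, so each factor of the intersection lies in $\mathcal G^s$ for $s$ close to $1$, and the conclusion is immediate.

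What this buys is precisely the elimination of the two obstacles you flag. Your ``surviving family captures dimension near $1-\eta$ inside an arbitrary interval'' is not a triviality: it is exactly the content of Propositions~\ref{unitintervaltocylinders}--\ref{rightvalueforcylinders}, which show $\liminf_n N_\infty^s(C\cap G_{\bar q}^{f,m}(n,\epsilon))\ge c|C|^s$ for every $f$-cylinder $C$ via a contrapositive self-refinement argument, and Lemma~\ref{factorbetweenmeasures}, which transfers this from $f$-cylinders to dyadic intervals using condition~(i). Your second worry---that the dimension deficits incurred when switching codings might accumulate over infinitely many stages---is what the $\mathcal G^s$ formalism absorbs for free: once each set is in the class, no bookkeeping across stages is needed. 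Your Moran construction could be completed, but to do so you would have to prove the local uniform density estimate above anyway, and then control the stage-to-stage losses explicitly; the Falconer machinery packages both steps. In short, your plan is correct in spirit and more elementary in intent, but the paper's route is shorter and dispatches exactly the steps you left as ``delicate''.
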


\begin{thm}\label{extremelynonormal}
Let $(f_i)_{i=1}^\infty$ be a sequence of functions satisfying conditions $(i)$ and $(ii)$. Then the set of numbers for which the frequency $\frac{\tau_{w}^f(x,n)}{n-m}$ does not converge as $n\to \infty$ for any word $w$ of any length $m$, in the expansion to any of the functions $f_i$, has Hausdorff dimension 1. 
\end{thm}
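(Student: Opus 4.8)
The plan is to exhibit the set in question as a countable intersection of accumulation-point sets, evaluate its dimension with the paper's countable-intersection theorem, and invoke condition (ii) to make every set in the family have dimension arbitrarily close to $1$. Throughout, let $T$ denote the set in the theorem, i.e.\ the set of $x\in[0,1]$ such that for every $i$, every $m\in\mathbb N$, and every word $w$ of length $m$ in the alphabet $\{0,\dots,g_{f_i}-1\}$, the sequence $\tau^{f_i}_w(x,n)/(n-m)$ fails to converge as $n\to\infty$. First I would enumerate the obstructions to convergence, namely the triples $(i,m,w)$ as above; there are countably many, say $(i_k,m_k,w_k)_{k=1}^\infty$. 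The elementary fact driving everything is that if a single $x$ admits two vectors $\bar p,\bar q\in A^{f_{i_k},m_k}(x)$ whose $w_k$-coordinates differ, $p_{w_k}\neq q_{w_k}$, then along two subsequences the scalar sequence $\tau^{f_{i_k}}_{w_k}(x,n)/(n-m_k)$ tends to the distinct limits $p_{w_k}$ and $q_{w_k}$, so it diverges. It therefore suffices to force, for every $k$, two such vectors into $A^{f_{i_k},m_k}(x)$.

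Fix $\eta>0$. For each $k$, condition (ii) supplies for the pair $(f_{i_k},m_k)$ a vector $\bar p^{(k)}$ with $\dim_H G^{f_{i_k},m_k}_{\bar p^{(k)}}=1$ and, for the word $w_k$, a vector $\bar q^{(k)}$ with $q^{(k)}_{w_k}\neq p^{(k)}_{w_k}$ and $\dim_H G^{f_{i_k},m_k}_{\bar q^{(k)}}>1-\eta$. Set
\[
P_k=\{x:\bar p^{(k)}\in A^{f_{i_k},m_k}(x)\},\qquad Q_k=\{x:\bar q^{(k)}\in A^{f_{i_k},m_k}(x)\}.
\]
Each of these is an accumulation-point set of the form $\bigcap_{\ell\ge1}\bigcap_{N\ge1}\bigcup_{n\ge N}G^{f_{i_k},m_k}_{\bar r}(n,1/\ell)$, with $\bar r=\bar p^{(k)}$ for $P_k$ and $\bar r=\bar q^{(k)}$ for $Q_k$; hence, by the containment $G^{f,m}_{\bar r}\subset\bigcap_N\bigcup_{n\ge N}G^{f,m}_{\bar r}(n,\epsilon)$ recorded after Remark~\ref{gopen}, we get $\dim_H P_k\ge1$ and $\dim_H Q_k\ge1-\eta$. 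By the previous paragraph every $x\in T^{(\eta)}:=\bigcap_k(P_k\cap Q_k)$ lies in $T$, so $T^{(\eta)}\subset T$.

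Now apply the countable-intersection dimension theorem to the family $\{P_k,Q_k\}_{k\ge1}$, each member being an accumulation-point set for one of the expansions $f_{i_k}$: it gives $\dim_H T^{(\eta)}=\inf_k\min(\dim_H P_k,\dim_H Q_k)\ge1-\eta$. As $T^{(\eta)}\subset T$ for every $\eta>0$, monotonicity of Hausdorff dimension yields $\dim_H T\ge\sup_{\eta>0}(1-\eta)=1$, and since the reverse inequality is trivial, $\dim_H T=1$. Note that it is essential here to fix the single tolerance $\eta$ across \emph{all} $k$ and then let $\eta\to0$; taking $\delta_k\to0$ within one intersection would only give the infimum $1-\sup_k\delta_k<1$.

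I expect the genuine difficulty to lie entirely in the countable-intersection theorem invoked above, rather than in this reduction. Proving it means building, for the whole family at once, a single Cantor subset whose generation-$n$ cylinders are chosen so that as $n\to\infty$ the length-$m_k$ word frequencies in the $f_{i_k}$-expansion accumulate at each prescribed vector; one interleaves ever longer blocks tuned to the successive targets and bounds the dimension from below by a mass-distribution argument resting on the bounded-distortion estimate (i), $|C_{x_1\dots x_{n+1}}|/|C_{x_1\dots x_n}|>1/(g_{f}K_{f})$. The two delicate points are that each map $f_i$ appears in infinitely many constraints, so the block lengths must grow fast enough that its frequencies still accumulate at all of its prescribed vectors, and that the distortion constants $K_{f_i}$ vary with $i$ while being fixed once $i$ is fixed, which is what makes the per-coordinate control uniform.
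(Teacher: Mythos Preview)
Your reduction is correct and is essentially the paper's own argument: both fix a tolerance, choose for each triple $(i,m,w)$ two frequency vectors $\bar p,\bar q$ with $p_w\neq q_w$ whose $G$-sets have dimension close to $1$, intersect the corresponding accumulation-point sets over all triples, and then let the tolerance go to zero. The paper phrases this via Proposition~\ref{accumulationthm} (each accumulation set lies in Falconer's class $\mathcal G^s$ for the appropriate $s$) together with Theorem~\ref{intersectionproperty}, whereas you package the same two steps into a single appeal to Theorem~\ref{intersectiontheorem}; these are logically the same route.

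Where you diverge from the paper is only in your closing paragraph's expectation of how the intersection result is actually proved. You anticipate a direct Cantor construction, interleaving long blocks adapted to the successive targets and running a mass-distribution argument, and you correctly flag the difficulties such a construction would face (infinitely many constraints per map, distortion constants $K_{f_i}$ varying with $i$). The paper never does this. Instead it works one map at a time: for a fixed $f$ it shows, via the net measures $N_\infty^s$ and Propositions~\ref{rightvalueofunitinterval}--\ref{accumulationthm}, that each accumulation set $\{x:\bar p\in A^{f,m}(x)\}$ satisfies Falconer's criterion for membership in $\mathcal G^s$, using only that map's constant $K_f$. Falconer's abstract closure of $\mathcal G^s$ under countable intersections (Theorem~\ref{intersectionproperty}) then handles the intersection across all $f_i$ for free, so the coordination problems you anticipate simply never arise. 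Your sketched approach could presumably be made to work, but the $\mathcal G^s$ machinery buys a clean decoupling of the constraints.
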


\begin{thm}\label{intersectiontheorem}
Let $(f_i)_{i=1}^\infty$ be a sequence of functions satisfying condition $(i)$, let $(m_i)_{i=1}^\infty $ be a sequence of numbers in $\mathbb N$ and let $(\bar p_i)_{i=1}^\infty$ be a sequence such that $\bar p_i\in [0,1]^{g_{f_i}^{m_i}}$ for each $i$. Then
\[
\dim_H\left(\, \bigcap_{i=1}^\infty \{\, x : \bar p_i \in A^{f_i,m_i}(x)\, \}\, \right)=
\inf_i \Bigg\{ \, \dim_H \Big( \{\, x:\bar p_i \in A^{f_i,m_i}(x)\,  \} \Big) \, \Bigg\}.
\]
\end{thm}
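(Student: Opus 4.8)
The plan is to prove the two inequalities separately. The easy direction is that the dimension of the intersection is bounded above by each individual dimension, since the intersection is contained in each set $\{x:\bar p_i\in A^{f_i,m_i}(x)\}$; monotonicity of Hausdorff dimension then gives $\le\inf_i$. So the real content is the lower bound: I must show the intersection is \emph{at least} as large as the infimum. Let $s=\inf_i\dim_H(\{x:\bar p_i\in A^{f_i,m_i}(x)\})$ and fix $s'<s$. The goal is to construct a subset of the countable intersection that carries a measure (or a Cantor-like structure) witnessing dimension at least $s'$, and then let $s'\uparrow s$.

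\medskip

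\noindent\textbf{Construction strategy.} The key idea is a \emph{nested switching} Cantor construction. Each individual set $\{x:\bar p_i\in A^{f_i,m_i}(x)\}$ contains $G^{f_i,m_i}_{\bar p_i}$ and, more flexibly, points whose frequencies merely \emph{accumulate} at $\bar p_i$; the accumulation requirement is much weaker than convergence, which is exactly what makes a simultaneous construction feasible. Since each set has dimension at least $s>s'$, I can find, for each $i$, long blocks of digits in the $f_i$-expansion whose word-frequencies are within $\epsilon$ of $\bar p_i$ and which, via the bounded-distortion hypothesis (i), fill out cylinders of controlled relative size $|C_{x_1\dots x_{n+1}}|/|C_{x_1\dots x_n}|>1/(g_{f_i}K_{f_i})$. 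The construction proceeds in stages: at stage $k$ I append a long $f_{i_k}$-good block (for a suitable schedule $i_k$ running through $1,2,\dots$ infinitely often, e.g. by diagonal enumeration) that pushes the running $f_{i_k}$-frequency vector close to $\bar p_{i_k}$, while making these blocks grow fast enough that each $f_i$-frequency returns arbitrarily close to $\bar p_i$ infinitely often. Because we only need accumulation, the distortions introduced when we switch from optimizing $f_i$ to optimizing $f_j$ are harmless: the $f_i$-frequency is allowed to drift during $f_j$-blocks, as long as it comes back.

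\medskip

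\noindent\textbf{Dimension bookkeeping.} To get the lower bound on dimension I would put a mass distribution on the resulting Cantor set and apply the mass distribution principle (Frostman). The crucial point is to ensure that the ``switching'' between different maps does not degrade the local dimension below $s'$. Here bounded distortion is essential: the ratio bound in (i) lets me compare the diameters of cylinders in one expansion with the measure assigned, and to control how a cylinder in the $f_i$-coding relates to covers by cylinders in the $f_j$-coding. By choosing the block lengths $\ell_k\to\infty$ increasing sufficiently rapidly relative to the previous stages, the contribution of each switch to the Hölder exponent of the mass distribution becomes negligible in the limit, so the local dimension at every point of the Cantor set is at least $s'$. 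Letting $s'\uparrow s$ completes the lower bound, and together with the trivial upper bound yields equality.

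\medskip

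\noindent\textbf{Main obstacle.} The principal difficulty is the interaction between the \emph{different} expansions: a set of digits that has near-optimal frequency behaviour and near-full cylinder size for $f_i$ need not have any good structure for $f_j$, and the two symbolic codings of $[0,1)$ are genuinely different partitions. Controlling the Hausdorff dimension requires estimating, for a cylinder well-adapted to one map, how it is cut by the cylinders of another map, and showing that the accumulated loss over infinitely many switches is summable or otherwise controllable. Overcoming this is where hypothesis (i) (uniform bounded distortion, giving a uniform lower bound on relative cylinder lengths, hence a uniform comparison between symbolic combinatorics and Euclidean diameter) does the heavy lifting; the fact that we require only accumulation rather than convergence of frequencies is what gives enough freedom to schedule the blocks so that every map is satisfied infinitely often without the constructions obstructing one another.
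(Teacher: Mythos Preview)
Your approach is genuinely different from the paper's, and while the outline is plausible, there is a real gap precisely at the point you flag as the ``main obstacle''.

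The paper never performs a cross-map Cantor construction at all. Instead it invokes Falconer's large-intersection classes $\mathcal G^s$: Proposition~\ref{accumulationthm} shows that each individual accumulation set $\{x:\bar p_i\in A^{f_i,m_i}(x)\}$ belongs to $\mathcal G^s$ for every $s\le\dim_H(G_{\bar p_i}^{f_i,m_i})$, and Falconer's Theorem~\ref{intersectionproperty} says $\mathcal G^s$ is closed under countable intersection. The lower bound on the dimension of the intersection then drops out immediately from the definition of $\mathcal G^s$, with no need ever to compare the $f_i$-coding to the $f_j$-coding. All the technical work (Propositions~\ref{unitintervaltocylinders}--\ref{rightvalueforcylinders}) is carried out \emph{one map at a time}: one shows that $N_\infty^s(C\cap G_{\bar p}^{f,m}(n,\epsilon))\ge c|C|^s$ for every $f$-cylinder $C$, which via Lemma~\ref{oldfalconerlemma} and Theorem~\ref{falconermain} certifies membership in $\mathcal G^s$.

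Your proposal, by contrast, tries to build the intersection directly and must therefore confront the interaction of different codings. The gap is that condition~(i) does \emph{not} give what you claim: bounded distortion for $f_i$ controls how $f_i$-subcylinders sit inside $f_i$-cylinders, but says nothing about how an $f_i$-cylinder is cut by $f_j$-cylinders. To make the switching step rigorous you would need, for every interval $J$ (in particular for an $f_j$-cylinder from the previous stage), a lower bound on the $s'$-mass of the $f_i$-cylinders of generation $n$ inside $J$ whose frequencies are $\epsilon$-close to $\bar p_i$. That estimate is essentially the content of Proposition~\ref{rightvalueforcylinders}, which you have not supplied; and once one has proved it, the Falconer-class route is shorter and cleaner than threading a mass distribution through an explicit nested construction. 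So either your argument is incomplete at the crucial step, or completing it amounts to reproving the paper's key proposition and then doing extra work that the $\mathcal G^s$ machinery renders unnecessary.
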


\subsection{$\boldsymbol{\beta}$-shifts where the expansion of 1 terminates}

The following method to expand real numbers in non-integer bases was introduced by R\'enyi~\cite{Renyi} and Parry~\cite{Parry}. For more details and proofs of the statements below, see their articles. 

Let $[x]$ denote the integer part of the number $x$. Let $\beta \in (1,2)$. For any $x \in [0, 1]$ we associate the sequence $d(x,\beta) =  \{d_n (x, \beta)\}_{n=0}^\infty \in \{0, 1\}^\mathbb N$ defined by
    \[
      d_n (x, \beta) :=
      [\beta f_\beta^{n} (x)],
    \]
    where $f_\beta (x) = \beta x \mod 1$.
    The closure of the set
    \[
      \{\, d (x,\beta) : x \in [0,1)\,\}
    \]
    is denoted by $S_\beta$ and it is called the $\beta$-shift. It is invariant under the left-shift $\sigma \colon \{i_n\}_{n=0}^\infty \mapsto \{i_{n+1} \}_{n=1}^\infty$ and the map $d (\cdot, \beta) \colon x \mapsto d (x, \beta)$ satisfies $\sigma^n ( d (x, \beta) ) = d ( f_\beta^n (x), \beta)$. If we order $S_\beta$ with the lexicographical ordering then the map $d ( \cdot, \beta)$ is  one-to-one and monotone increasing. The subshift $S_\beta$ satisfies
    \begin{equation} \label{eq:Sbeta}
      S_\beta = \{\, \{j_k\} : \sigma^n \{j_k\} < d (1, \beta) \ \forall n \,\}.
    \end{equation}
  
     If $x \in [0,1]$ then
    \[
      x = \sum_{k=0}^\infty \frac{d_k (x, \beta) }{\beta^{k + 1}}.
    \]
    We let $\pi_\beta$ be the map $\pi_\beta \colon S_\beta \to [0,1)$ defined by
    \[
      \pi_\beta \colon \{i_k\}_{k=0}^\infty \quad \mapsto \quad \sum_{k=0}^\infty \frac{i_k}{\beta^{k + 1}}.
    \]
    Hence, $\pi_\beta ( d(x, \beta)) = x$ holds for any $x \in [0,1)$ and $\beta > 1$.

    A cylinder $s$ is a subset of $[0,1)$ such that 
    \[
      s  :=\pi_\beta( \{\, \{j_k\}_{k=0}^\infty : i_k = j_k,\ 0\leq k < n \,\})
    \]
    holds for some $n$ and some sequence $\{i_k\}_{k=0}^\infty$. We then say that $s$ is an $n$-cylinder or a cylinder of generation $n$ and write
    \[
      s = [i_0 \cdots i_{n-1}].
    \]

As in Section \ref{fullshift} we define     
\[
\tau_{w}^\beta(x,n)=\#\big\{\, i\in\{0,\dots,n-m-1\}: x_i,\dots,x_{i+m-1}=w \, \big\}
\]
for any word $w$ of length $m$ and

\begin{align}
G_{\bar p}^{\beta,m}=\left\{\, x:\frac{\tau_{w_j}^\beta(x,n)}{n-m}\to p_{w_j}, \quad n\to\infty, \quad j=1, \dots, 2^{m} \, \right\}
\end{align}
where $0\leq p_{w_j}\leq 1$ for all $j$. We also define
\[
G_{\bar p}^{\beta,m} (n,\epsilon) =\left\{\, x\in [0,1): p_w-\epsilon< \frac{\tau_{w_j}^f(x,n)}{n-m}<p_w+\epsilon \quad j=1, \dots, 2^m\, \right\}.
\] 
As in Section \ref{fullshift} we note the following.

\begin{rem}\label{betagopen}
Let $m\in \mathbb N$ and $\epsilon>0$. If $\bar p$ is such that for some word $w\neq 0^m$ of length $m$ we have $p_w>\epsilon$, then the set 
\[
\bigcap_{N=1}^\infty \bigcup_{n=N}^\infty  G_{\bar p}^{\beta,m}(n,\epsilon)
\]
is a $G_\delta$ set.
\end{rem}

Note that
\[
G_{\bar p}^{\beta,m} \subset \bigcap_{N=1}^\infty \bigcup_{n=N}^\infty  G_{\bar p}^{\beta,m} (n,\epsilon)
\]
for all $\epsilon >0$. Let $A^{\beta,m}(x)$ denote the set of points of accumulation in $[0,1] R^{2^m}$ for the frequencies $\Big(\frac{\tau_{w_i}^\beta(x,n)}{n-m}\Big)_{i=1}^{2^m}$ as $n\to \infty$, where $(w_i)_{i=1}^{2^m}$ is an enumeration of the words of length $m$.     
    
Consider $\beta$ such that the expansion of 1 terminates, \mbox{i.e.} such that we have $d(1, \beta)=j_0 \dots j_{k-1} 0^\infty$. The set of such $\beta$ is dense in $(1,2)$ and for such $\beta\in (1,2)$ we can use (\ref{eq:Sbeta}) to construct $S_\beta$ from the full shift $\Sigma_2=\{0,1\}^{\mathbb N}$ as follows. There are finitely many words $w$ of length $k$ such that $w<d(1, \beta)$. If we start with $\Sigma_2$ and remove all elements that contain any of these words, then by  (\ref{eq:Sbeta})  we get $S_\beta$. Thus $S_\beta$ is a subshift of finite type. For such shifts there is a finite constant $C_\beta>0$ such that
\[
\frac{|C_{x_0\dots x_{n}}|}{|C_{x_0 \dots x_{n-1}}|}>\frac{1}{\beta C_\beta}
\]
for all sequences $(x_i)_{i=0}^\infty \in S_{\beta}$ and all $n \in \mathbb N$. We can use this to prove the following theorems.

\begin{thm}\label{betaextremelynonormal}
Let $(\beta_i)_{i=1}^\infty$ be any sequence in $(1,2)$ such that the expansion of $1$ terminates for each $\beta_i$.  Then the set of numbers for which $\frac{\tau_{w}^\beta(x,n)}{n-m}$ does not converge as $n\to \infty$ for any word $w$ of any length $m$, in the expansion to any of the functions $f_{\beta_i}$, has Hausdorff dimension $1$. 
\end{thm}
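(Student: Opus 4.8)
The plan is to derive Theorem~\ref{betaextremelynonormal} as the $\beta$-shift analogue of Theorem~\ref{extremelynonormal}, by feeding a suitable countable family of frequency vectors into a $\beta$-shift version of the intersection theorem. First I would record that, for each $\beta_i$ whose expansion of $1$ terminates, $S_{\beta_i}$ is a subshift of finite type and the inequality $\frac{|C_{x_0\dots x_n}|}{|C_{x_0\dots x_{n-1}}|}>\frac{1}{\beta_i C_{\beta_i}}$ noted above is precisely the cylinder-ratio bound of condition $(i)$. Granting that the proof of Theorem~\ref{intersectiontheorem} extends to this setting (see the final paragraph), I may use
\[
\dim_H\left(\bigcap_{i=1}^\infty \{\,x:\bar p_i\in A^{\beta_i,m_i}(x)\,\}\right)=\inf_i \dim_H\left(\{\,x:\bar p_i\in A^{\beta_i,m_i}(x)\,\}\right)
\]
for any sequence of triples $(\beta_i,m_i,\bar p_i)$ with $S_{\beta_i}$ of finite type.

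Next I would supply the $\beta$-shift analogue of condition $(ii)$ by thermodynamic formalism on the finite-type shift $S_\beta$. Since $f_\beta'=\beta$ identically, every invariant measure has Lyapunov exponent $\log\beta$, so the local dimension of $\mu$-generic points equals $h(\mu)/\log\beta$; such points lie in $G_{\bar p(\mu)}^{\beta,m}$, where $\bar p(\mu)$ is the vector of $\mu$-frequencies of the words of length $m$, whence $\dim_H(G_{\bar p(\mu)}^{\beta,m})\geq h(\mu)/\log\beta$. The Parry measure is the Markov measure of maximal entropy $\log\beta$, giving a vector $\bar p$ with $\dim_H(G_{\bar p}^{\beta,m})=1$. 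Perturbing its transition probabilities produces Markov measures whose entropy is arbitrarily close to $\log\beta$, and hence generic sets of dimension arbitrarily close to $1$, while changing the frequency of any prescribed word $w$ of length $m$. This is exactly what condition $(ii)$ abstracts.

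I would then combine the two ingredients. Fix $\delta>0$ and enumerate the countably many triples $(\beta_i,m,w)$ with $i,m\in\mathbb N$ and $w$ a word of length $m$. For each $(\beta_i,m)$ let $\bar p_{i,m}$ be the Parry frequencies, of dimension $1$; for each triple $(\beta_i,m,w)$ let $\bar q_{i,m,w}$ be a perturbed vector with $(\bar q_{i,m,w})_w\neq(\bar p_{i,m})_w$ and $\dim_H(G_{\bar q_{i,m,w}}^{\beta_i,m})>1-\delta$. Assembling all of the $\bar p_{i,m}$ and $\bar q_{i,m,w}$ into a single sequence and applying the intersection theorem yields a set $E_\delta$ with $\dim_H(E_\delta)\geq 1-\delta$. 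If $x\in E_\delta$ then for every triple $(\beta_i,m,w)$ both $\bar p_{i,m}$ and $\bar q_{i,m,w}$ belong to $A^{\beta_i,m}(x)$, so the $w$-coordinate of the frequency vector accumulates at the two distinct values $(\bar p_{i,m})_w$ and $(\bar q_{i,m,w})_w$; hence $\frac{\tau_w^{\beta_i}(x,n)}{n-m}$ fails to converge. Thus $E_\delta$ is contained in the set of the theorem, which therefore has dimension at least $1-\delta$, and letting $\delta\to0$ gives Hausdorff dimension $1$.

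The step I expect to demand the most care is the one I granted in the first paragraph: that the proof of Theorem~\ref{intersectiontheorem} survives the passage from a full shift to the finite-type shift $S_\beta$. That proof builds a Moran-type subset whose cylinders are selected to force the prescribed vectors into the accumulation set, and one must verify that the words forbidden in $S_\beta$ still leave enough admissible cylinders at each generation to run the construction without losing dimension. The exponential admissible-word count (growth rate $\log\beta$) together with the uniform ratio bound $\frac{1}{\beta C_\beta}$ are exactly what keep the construction and its dimension estimate intact, but the combinatorial bookkeeping has to be redone for the finite-type shift rather than simply quoted from the full-shift case.
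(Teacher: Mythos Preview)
Your high-level strategy matches the paper's: supply the $\beta$-shift analogue of condition~$(ii)$ and then intersect countably many accumulation-point sets to force non-convergence of every word frequency. The paper cites higher-dimensional multifractal analysis on subshifts of finite type (Barreira--Saussol--Schmeling) rather than your explicit Parry-measure perturbation, but your entropy argument is a concrete instance of what that citation provides. The paper also organises the intersection step through Proposition~\ref{betaaccumulationthm} (membership of the accumulation sets in $\mathcal G^s$) together with Theorem~\ref{intersectionproperty}, rather than through Theorem~\ref{betaintersectiontheorem} as you do, but these are equivalent packagings of the same content.

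Your final paragraph, however, misreads the underlying mechanism. The proof of Theorem~\ref{betaintersectiontheorem} (equivalently, of Proposition~\ref{betaaccumulationthm}) does \emph{not} build a Moran-type Cantor subset with admissible-word counting. It works entirely through Falconer's large-intersection classes: one introduces an outer measure $N_\infty^s$ based on $\beta$-cylinders, compares it to Falconer's dyadic net measure $M_\infty^s$ (Lemma~\ref{betafactorbetweenmeasures}), and then establishes the uniform lower bound $N_\infty^s(C\cap G_{\bar p}^{\beta,m}(n,\epsilon))\geq c|C|^s$ over all cylinders (Propositions~\ref{betarightvalueofunitinterval}--\ref{betarightvalueforcylinders}), which by Lemma~\ref{betaoldfalconerlemma} places the $\limsup$ sets in $\mathcal G^s$. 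The finite-type structure enters only through the observation that every $\beta$-cylinder contains, after appending enough zeros, a sub-cylinder of comparable size that is an exact scaled copy of $[0,1)$; an arbitrary cylinder is then decomposed into a geometric series of such full sub-cylinders (Proposition~\ref{betaunitintervaltocylinders}). Once membership in $\mathcal G^s$ is established, Theorem~\ref{intersectionproperty} handles all countable intersections at once, and none of the combinatorial bookkeeping you anticipate is required.
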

    
\begin{thm}\label{betaintersectiontheorem}
Let $(\beta_i)_{i=1}^\infty$ be any sequence in $(1,2)$ such that the expansion of $1$ terminates for each $\beta_i$, let $(m_i)_{i=1}^\infty $ be a sequence of numbers in $\mathbb N$ and let $(\bar p_i)_{i=1}^\infty$ be a sequence such that $\bar p_i\in [0,1]^{2^{m_i}}$ for each $i$. Then
\[
\dim_H\left(\, \bigcap_{i=1}^\infty \{\, x : \bar p_i \in A^{\beta_i,m_i}(x)\, \}\, \right)=
\inf_i \Bigg\{ \, \dim_H \Big( \{\, x:\bar p_i \in A^{\beta_i,m_i}(x)\,  \} \Big) \, \Bigg\}.
\]
\end{thm}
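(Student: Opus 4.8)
The plan is to reduce Theorem~\ref{betaintersectiontheorem} to the full-shift intersection result, Theorem~\ref{intersectiontheorem}, by exhibiting each $\beta$-shift $S_{\beta_i}$ as a system satisfying condition $(i)$. The key observation, already supplied in the excerpt, is that for each $\beta_i$ whose expansion of $1$ terminates, $S_{\beta_i}$ is a subshift of finite type satisfying the ratio bound
\[
\frac{|C_{x_0\cdots x_n}|}{|C_{x_0\cdots x_{n-1}}|}>\frac{1}{\beta_i C_{\beta_i}}
\]
for all admissible sequences and all $n$. This is precisely the lower bound on consecutive cylinder ratios that condition $(i)$ delivers for full shifts, with the role of $g_f K_f$ played by $\beta_i C_{\beta_i}$. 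So the first step is to verify that the proof of Theorem~\ref{intersectiontheorem} only ever uses condition~$(i)$ through this uniform lower bound on cylinder ratios together with the fact that $|C_{x_0\cdots x_n}|\to 0$, and never uses that the shift is \emph{full}. If the earlier proof is written carefully enough, essentially the same argument transfers verbatim with $g_{f_i}K_{f_i}$ replaced by $\beta_i C_{\beta_i}$.

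The one genuine point to check is the following subtlety: the construction underlying Theorem~\ref{intersectiontheorem} builds a Cantor-type subset of the target intersection by concatenating long blocks drawn from cylinders, and this requires that the words used for the concatenation be \emph{admissible} in each $S_{\beta_i}$ simultaneously. In a full shift every concatenation of words is again admissible, but in a subshift of finite type one must respect the forbidden-word constraints. The standard device is to insert a fixed short buffer word (for instance a block of the form $0^r$ with $r$ large enough that $0^r$ is always legal and that any word may be followed by $0^r$ and $0^r$ may be followed by any word) between successive blocks. Since $d(1,\beta_i)$ begins with $1$, the all-zero word is admissible in every $S_{\beta_i}$, and for $r$ exceeding the length $k_i$ of the terminating expansion of $1$ the buffer $0^r$ can be freely concatenated on both sides. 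These buffers occupy an asymptotically negligible fraction of the total length, so they neither change the limiting word frequencies nor the Hausdorff dimension of the resulting Cantor set.

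I would therefore carry out the argument in the following order. First, fix the data $(\beta_i,m_i,\bar p_i)$ and set $d_i:=\dim_H(\{x:\bar p_i\in A^{\beta_i,m_i}(x)\})$, so that the infimum $d:=\inf_i d_i$ is the claimed dimension. The inequality $\le d$ is immediate, since the intersection is contained in each factor set. For the reverse inequality $\ge d$, I would replay the full-shift construction: choose a decreasing sequence $\epsilon_\ell\to 0$ and a rapidly increasing sequence of scales, and along a single point $x$ build nested admissible cylinders so that for each index $i$ the frequency vector of $m_i$-words returns infinitely often to within $\epsilon_\ell$ of $\bar p_i$, realizing $\bar p_i$ as an accumulation point, while the cylinder ratio bound guarantees a mass distribution whose local dimension is at least $d-\eta$ for arbitrary $\eta>0$. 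The buffer insertion described above is what makes each concatenated word legal in the relevant $S_{\beta_i}$, and the ratio bound $1/(\beta_i C_{\beta_i})$ is what feeds the mass-distribution (Frostman) lower bound on dimension exactly as $1/(g_f K_f)$ does in the full-shift case.

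The main obstacle I anticipate is the bookkeeping for simultaneity across the infinitely many subshifts: one must interleave the ``target scales'' for the different indices $i$ so that every $\bar p_i$ is approached infinitely often, while ensuring that the buffer words, whose admissible length $r$ must dominate all $k_i$ up to the current stage, remain a vanishing fraction of each prefix. Because $r$ may need to grow with the stage (to accommodate larger $k_i$ as $i$ increases), I would keep the per-stage buffer length fixed within a stage and let it grow only slowly between stages, so that the cumulative buffer contribution to both the word counts and the measure is controlled. Once this scheduling is set up, verifying that it reproduces the dimension lower bound is a routine adaptation of the full-shift computation, and the theorem follows.
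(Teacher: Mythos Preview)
Your proposal has a genuine gap. The sets $\{x:\bar p_i\in A^{\beta_i,m_i}(x)\}$ are defined via the $\beta_i$-expansion of $x$, and for distinct $i$ these are \emph{different} symbolic codings of the same real number. Concatenating blocks and inserting $0^r$ buffers is a device that only makes sense within a \emph{single} shift space: if you construct $x$ by prescribing longer and longer prefixes of its $\beta_1$-expansion, you have no direct control over the digits of its $\beta_2$-expansion, and no buffer in one coding enforces admissibility or frequency targets in another. So the ``interleaving'' you describe cannot be carried out at the symbolic level across different $\beta_i$, and the bookkeeping you worry about (growing $r$ to dominate all $k_i$) is beside the point. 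Note also that Theorem~\ref{intersectiontheorem} is stated for maps generating \emph{full} shifts; since $f_{\beta_i}$ is not onto on its rightmost branch, the $\beta$-shifts do not satisfy the hypotheses of Section~\ref{fullshift}, so there is no straight reduction to that theorem either.

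The paper avoids this difficulty entirely by working through Falconer's large intersection classes rather than an explicit Cantor construction. It shows, via the $\beta$-specific Propositions~\ref{betaunitintervaltospecialcylinders}--\ref{betaaccumulationthm}, that each factor set $\{x:\bar p_i\in A^{\beta_i,m_i}(x)\}$ lies in $\mathcal G^s$ for every $s\le \dim_H(G_{\bar p_i}^{\beta_i,m_i})$; the degenerate case where some $\bar p_i$ is concentrated on $0^{m_i}$ is disposed of separately since then the right-hand infimum is $0$. Membership in $\mathcal G^s$ is checked for each $i$ \emph{individually}, using only the internal structure of the $\beta_i$-expansion, and then Theorem~\ref{intersectionproperty} (closure of $\mathcal G^s$ under countable intersection) delivers the lower bound on the dimension of the intersection abstractly, with no need to understand how the various expansions interact. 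The subshift-of-finite-type hypothesis enters only to guarantee that every $\beta_i$-cylinder contains a subcylinder of comparable length that is an exact rescaling of $[0,1)$, which replaces the bounded-distortion input from the full-shift case.
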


\section {Falconer's classes}

In \cite{falconer}, Falconer defines classes of sets in $\R^n$ with the property that dimensions are preserved under countable intersections. The idea in the proofs of the main theorems of this text is to show that the sets involved are in the classes defined by Falconer. We present here a one-dimensional version of these classes. 

\begin{deff}\label{defofclass}
For $0<s\leq1$, let $\mathcal G^s$ be the class of $G_\delta$ sets $F\subset \R$ such that $\dim_H \left(\cap_{i=1}^\infty f_i(F)\right)\geq s$ for all sequences of similarity transformations $\{f_i\}_{i=1}^\infty$. 
\end{deff}

\begin{rem}\label{dimensionequal}
It follows immediately from the definition that for each choice of $s \in (0,1]$ and $t\in (0,s)$ we have
\[
\mathcal G^t\subset \mathcal G^s \textrm{ and } \ \mathcal G^s=\bigcap_{t\in (s,1]}\mathcal G^t.
\]
\end{rem}

\noindent As a tool in his proofs, Falconer uses outer measures $M_\infty^s$ defined by
\[
M_\infty^s(F)=\inf \Big\{\, \sum_{i=1}^\infty |I_i|^s:F\subset \cup_{i=1}^\infty I_i \, \Big\}
\]
where each $I_i$ is of the form $[2^km,2^k(m+1))$, $m\in \{0, \dots 2^m-1\}$, with which we call dyadic intervals. He proves that $\mathcal G^s$ can be characterised in several ways. We present here the characterisation we will use.

\begin{thm}\label{falconermain}
If $F$ is a $G_\delta$ set in $\R$, then that $F$ is in the class $\mathcal G^s$ is equivalent to that there exists a constant $c>0$ such that 
\begin{align}
M_\infty^s(F\cap I)\geq c |I|^s \label{coveringcondition}
\end{align}
for any $I\subset \R$ of the form $[2^km,2^k(m+1))$ where $k\in \mathbb Z$.
\end{thm}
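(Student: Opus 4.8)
The plan is to prove the two implications separately, using throughout the elementary observation that the net measure $M_\infty^s$ is comparable, up to a multiplicative constant depending only on $s$, to the ordinary Hausdorff content $\mathcal H^s_\infty$; hence $M_\infty^s(E)>0$ is equivalent to $\mathcal H^s(E)>0$ and in particular implies $\dim_H E\ge s$. First I would record that the covering condition \eqref{coveringcondition} is invariant, up to the value of the constant, under similarity transformations: since Hausdorff content scales homogeneously, if $F$ satisfies \eqref{coveringcondition} with constant $c$ then each $f_i(F)$ is again $G_\delta$ and satisfies $M_\infty^s(f_i(F)\cap I)\ge c_i|I|^s$ for all dyadic $I$, with $c_i>0$ depending on the ratio of $f_i$ and on the comparison between dyadic and arbitrary intervals. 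Thus both directions reduce to statements about a countable family of $G_\delta$ sets, each obeying a uniform lower content bound on every dyadic interval.

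For sufficiency, suppose \eqref{coveringcondition} holds; I would fix an arbitrary dyadic interval $I_0$ and construct a Frostman measure $\mu$ supported on $\bigcap_{i}f_i(F)\cap \overline{I_0}$ with $\mu(J)\le |J|^s$ for every dyadic $J$ and $\mu(I_0)>0$. Writing each $f_i(F)=\bigcap_n U_{i,n}$ with $U_{i,n}$ open, I would build $\mu$ by a Cantor scheme: starting from $I_0$, at each stage I refine the current finite collection of dyadic intervals into dyadic subintervals that lie inside the relevant open set $U_{i,n}$ and on which the corresponding $f_i(F)$ still carries positive content, distributing the mass in proportion to that content. The uniform bound $c_i|I|^s$ supplied by the transferred covering condition is exactly what guarantees at every refinement enough room to place the required mass while keeping the upper bound $\mu(J)\le|J|^s$ intact. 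Diagonalising over the pairs $(i,n)$ forces the limiting support into $\bigcap_i\bigcap_n U_{i,n}=\bigcap_i f_i(F)$, and the mass distribution principle then yields $\mathcal H^s\big(\bigcap_i f_i(F)\big)\ge c_0\,\mu(I_0)>0$ for some $c_0>0$, so $\dim_H\big(\bigcap_i f_i(F)\big)\ge s$ and $F\in\mathcal G^s$.

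For necessity I would argue by contraposition: assume \eqref{coveringcondition} fails, so for every $c>0$ there is a dyadic interval $I$ with $M_\infty^s(F\cap I)<c|I|^s$. Rescaling such an interval to $[0,1)$ by a similarity, one obtains copies $f(F)$ that are arbitrarily thin in the sense of $s$-content inside $[0,1)$. Superposing countably many such copies over a single dyadic target already gives $M_\infty^s(\bigcap_i f_i(F)\cap I_0)=0$, hence $\mathcal H^s=0$; but since membership in $\mathcal G^s$ fails only when the dimension drops strictly below $s$, I would instead perform a hierarchical superposition, thinning the set inside every dyadic subinterval at every scale by choosing, for each interval produced so far, a new similarity mapping a low-density interval of $F$ onto it. A box-counting estimate — tracking that within an interval of length $\ell$ the next covering has $s$-sum at most $c'\ell^s$ with $c'<1$ fixed — then converts this scale-uniform decay into a bound $\dim_H\big(\bigcap_i f_i(F)\big)\le t$ for some $t<s$, so that $F\notin\mathcal G^s$.

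The main obstacle is the sufficiency direction, specifically the simultaneous control required in the Cantor construction: one must keep the measure thin enough to respect the Frostman upper bound $\mu(J)\le|J|^s$ through infinitely many refinements while keeping its total mass bounded below and forcing its support into all of the $G_\delta$ sets $f_i(F)$ at once. The tension between thin enough and not too thin is resolved precisely by the uniformity of the constant $c$ in \eqref{coveringcondition} over all dyadic intervals, so the crux is to show that this uniform lower content bound is never exhausted by the successive refinements. In the necessity direction the only delicate point is upgrading the $s$-content-zero statement to a strict dimension drop, which the hierarchical rather than single-scale superposition is designed to achieve.
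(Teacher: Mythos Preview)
The paper does not prove this theorem at all; it is quoted without proof from Falconer's paper \cite{falconer} (``He proves that $\mathcal G^s$ can be characterised in several ways. We present here the characterisation we will use.''). There is therefore no in-paper argument to compare your attempt against. Your outline is essentially the route Falconer himself takes: for the sufficiency direction one builds a Frostman-type measure on $\bigcap_i f_i(F)$ by a Cantor scheme, using the uniform lower content bound to guarantee that enough mass survives each refinement into the open sets $U_{i,n}$; this part of your sketch is sound.

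Your necessity sketch has the right architecture but one step is underspecified. From the failure of \eqref{coveringcondition} you correctly extract a fixed dyadic interval $I^*$ and a fixed $c'<1$ with $M_\infty^s(F\cap I^*)<c'|I^*|^s$, and then iterate by mapping (a rescaled copy of) $I^*$ onto each interval of the current cover. The point you gloss over is why geometric decay of the $s$-sum yields a genuine drop $\dim_H<s$ rather than merely $\mathcal H^s=0$. The missing observation is that the template cover of $F\cap I^*$ necessarily consists of \emph{proper} dyadic subintervals (otherwise its $s$-sum would be at least $|I^*|^s$), so every refinement shrinks all interval lengths by a factor $\le 1/2$. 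Writing $r_j$ for the normalised lengths of the template cover, the stage-$n$ cover then has $t$-sum $(\sum_j r_j^t)^n$; since $r_j\le 1/2$ one has $\sum_j r_j^t\le 2^{\,s-t}\sum_j r_j^s<2^{\,s-t}c'$, which is still $<1$ for $t$ slightly below $s$. That gives $\mathcal H^t(\bigcap_i f_i(F))=0$ for some $t<s$ and hence $F\notin\mathcal G^s$. With this clarification your contrapositive argument goes through.
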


\noindent It is obvious from the definition that if $F\in \mathcal G^s$, then $\dim_H(F)\geq s$. Falconer also proves that

\begin{thm}\label{intersectionproperty}
The class $\mathcal G^s$ is closed under countable intersections.
\end{thm}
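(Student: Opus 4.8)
The plan is to verify the net-measure covering condition of Theorem~\ref{falconermain} for the intersection and then use Remark~\ref{dimensionequal} to return from the lower exponents to $s$. Let $F_1,F_2,\dots\in\mathcal G^s$. Since each $F_n$ is $G_\delta$ and a countable intersection of $G_\delta$ sets is again $G_\delta$, the set $F:=\bigcap_{n=1}^\infty F_n$ is $G_\delta$. By Theorem~\ref{falconermain} together with Remark~\ref{dimensionequal} (which identifies $\mathcal G^s$ with the intersection of the classes $\mathcal G^t$ as $t$ approaches $s$), it suffices to produce, for every $t<s$, a constant $c=c(t)>0$ such that $M_\infty^t(F\cap I)\ge c|I|^t$ for every dyadic interval $I$. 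The point is that the freedom to work at an exponent $t$ strictly below $s$ supplies exactly the slack the argument needs.

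The first observation is that descending below $s$ removes the dependence on the individual constants of the $F_n$. Each $F_n$ satisfies $M_\infty^s(F_n\cap J)\ge c_n|J|^s$ for all dyadic $J$ by Theorem~\ref{falconermain}, and I claim this forces $M_\infty^t(F_n\cap J)=|J|^t$ for every $t<s$ and every dyadic $J$. Indeed, take any cover of $F_n\cap J$ by dyadic intervals, which we may assume are contained in $J$. If the cover left a dyadic gap $J'\subseteq J$ disjoint from all of them, then $F_n\cap J'=\emptyset$, contradicting $M_\infty^s(F_n\cap J')\ge c_n|J'|^s>0$; hence the cover tiles $J$. Since $x\mapsto x^t$ is subadditive for $t\le 1$, the covering intervals $I_i$ then satisfy $\sum_i|I_i|^t\ge\big(\sum_i|I_i|\big)^t=|J|^t$. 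Thus every $F_n$ fills every dyadic interval to full $t$-content, with constant $1$ independent of $n$.

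With this uniform full content in hand, the main step is a Cantor-type construction inside a fixed dyadic $I$ carrying a Frostman measure. Writing $F_n=\bigcap_k V_n^k$ with $V_n^k$ open and decreasing in $k$, I would build a decreasing sequence of finite families of disjoint dyadic intervals, together with measures $\mu_N$. At stage $N$ I refine each surviving interval $J$ by selecting finitely many dyadic subintervals lying inside $V_n^k$ for all $n,k\le N$; using the full $t$-content of $F_N$ in $J$, these can be chosen so that their $t$-contents sum to at least $(1-2^{-N})|J|^t$, and I redistribute the mass $\mu_{N-1}(J)$ onto them proportionally to $|\cdot|^t$ (capped at $|\cdot|^t$) so that the Frostman bound $\mu_N(Q)\le|Q|^t$ persists at every dyadic scale. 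The limiting compact set $K$ lies in every $V_n^k$, hence in $F$, while the limiting measure $\mu$ satisfies $\mu(Q)\le|Q|^t$ and has total mass $\mu(K)\ge\prod_{N}(1-2^{-N})\,|I|^t=:c|I|^t$. By the mass distribution principle for net measures, any dyadic cover $\{I_i\}$ of $K$ obeys $\sum_i|I_i|^t\ge\sum_i\mu(I_i)\ge\mu(K)$, so $M_\infty^t(F\cap I)\ge c|I|^t$ with $c>0$ uniform in $I$ and $t$. This is the required covering condition.

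The main obstacle is reconciling the two simultaneous demands of the construction: the selected intervals must burrow into the defining open sets $V_n^k$ of \emph{all} the $F_n$ at once, so that $K$ genuinely lands in the intersection, while still capturing almost all of the available $t$-content at each stage, so that the mass survives infinitely many refinements. The full-content lemma of the second paragraph is precisely what reconciles them: because every $F_n$ fills each dyadic interval to content $|J|^t$ with constant $1$, its intersection with the open neighbourhood $V_n^k\cap J$ still has full content, so only a factor $1-2^{-N}$ need be sacrificed at stage $N$, and $\prod_N(1-2^{-N})>0$. Maintaining the Frostman bound across scales during the redistribution is the routine bookkeeping; the conceptual content is the passage to the exponent $t<s$, which both yields the content lemma and, through Remark~\ref{dimensionequal}, delivers $F\in\mathcal G^s$.
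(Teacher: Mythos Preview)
The paper does not actually prove this theorem: it is quoted directly from Falconer~\cite{falconer}, with the surrounding text ``Falconer also proves that'', and no argument is supplied. So there is no in-paper proof to compare against.

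That said, your sketch is essentially Falconer's own argument, and the main ideas are correct: pass to an exponent $t<s$; show that each $F_n$ then has \emph{full} $M_\infty^t$-content in every dyadic interval (so the individual constants $c_n$ disappear); run a Cantor--Frostman construction inside a fixed dyadic $I$, refining at stage $N$ into the open sets $V_n^k$ for $n,k\le N$ while sacrificing only a factor $1-2^{-N}$ of content; and finally invoke $\mathcal G^s=\bigcap_{t<s}\mathcal G^t$ to return to $s$. This is exactly the route taken in~\cite{falconer}.

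Two small points of precision. First, in your full-content lemma the conclusion ``hence the cover tiles $J$'' is slightly too strong as a set-theoretic statement: a disjoint dyadic cover with no dyadic gap need not have union equal to $J$ (think of $J=[0,1)$ covered by $[2^{-k},2^{-k+1})$, $k\ge 1$, which misses the single point $0$). What the no-gap argument genuinely yields is $\sum_i|I_i|=|J|$, i.e.\ full Lebesgue measure, and that is all you need for the subadditivity step $\sum_i|I_i|^t\ge\big(\sum_i|I_i|\big)^t=|J|^t$. Second, your appeal to Remark~\ref{dimensionequal} uses the identity $\mathcal G^s=\bigcap_{t<s}\mathcal G^t$; this is the correct statement, but note that the paper's formulation of the remark has the inequality on $t$ written the other way, which appears to be a typographical slip. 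With these caveats, your outline is sound and matches Falconer's proof.
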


We note that sets like $\limsup_{n \to \infty} G_{\bar p}^{f,m}(n,\epsilon)$ are all subsets of $[0,1)$. There is no way that any of these sets can be in the class $\mathcal G^s$ since such sets must be dense in $\mathbb R$. But we defined the sets $G_{\bar p}^{f,m}(n,\epsilon)$ by expanding ${x\in [0,1)}$ using the function $f$. It is clear that we can do similarly in any interval $[n,n+1)$ where $n\in \mathbb Z$. We can thereby extend our sets $\limsup_{n\to \infty}G_{\bar p}^{f,m}(n,\epsilon)$ into $\mathbb R$. Let $\tilde F \subset [0,1)$ be some set of the type $\limsup_{n\to \infty}G_{\bar p}^{f,m}(n,\epsilon)$ and let $F$ be its extension to $\mathbb R$. To make sure that $F_k$ satisfies condition (\ref{coveringcondition}) of Theorem \ref{falconermain}, it is clearly enough to prove that for some constant $c>0$ it holds that
\[
M^s(\tilde F \cap I)\geq c |I|^s  
\]
for all dyadic intervals $I \subset [0,1)$. This would imply that $F \in \mathcal G^s$ and that we can control the dimension of its intersections with other sets in $\mathcal G^s$. Now, the intersections of sets of the type $\tilde F$ are just restrictions to $[0,1)$ of intersections of sets of the type $F_k$. This means that in $[0,1)$, the set $\tilde F$ behaves just like the set $F\in \mathcal G^s$ does in $\mathbb R$. Thus, in the remainder of this text we will say that a set $\tilde F \subset [0,1)$ is in $\mathcal G^s$ if we get a set in $\mathcal G^s$ by extending $\tilde F$ to a set $F$ in the way described above.

\section{Proofs}

\subsection{Functions generating full shifts}\label{fullshiftproof}
In this section we prove Theorems  \ref{nonnormalthm}, \ref{extremelynonormal} and \ref{intersectiontheorem}. When working with sets like $G_{\bar p}^{f,m}(n,\epsilon)$ it is much easier to consider covers consisting only of cylinders from the expansion by $f$ rather than using the dyadic intervals of the outer measure $M^s_\infty$. Let $N^s$ be the outer measure defined as
\[
N_\infty^s(F)=\inf \Big\{\, \sum_{i=1}^\infty |C_i|^s:F\subset \cup_{i=1}^\infty C_i \, \Big\}
\]
where each $C_i$ is a cylinder with respect to the expansion by $f$.

\begin{lem}\label{factorbetweenmeasures}
For each $f$ as described in section \ref{fullshift},  satisfying condition $(i)$, and any set $A\subset [0,1)$ we have
\[
M_\infty^s(A)\geq \frac{1}{2 g_f K_f} N_\infty^s(A).
\]
\end{lem}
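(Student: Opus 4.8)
The inequality is equivalent to the upper bound $N_\infty^s(A)\le 2 g_f K_f\, M_\infty^s(A)$, so the plan is to show that any cover of $A$ by dyadic intervals can be converted into a cover by $f$-cylinders whose $s$-cost has grown by at most the factor $2 g_f K_f$; taking the infimum over dyadic covers then yields the claim. The only nontrivial ingredient is a local covering estimate: every dyadic interval $I$ with $|I|\le 1$ can be covered by $f$-cylinders of total $s$-cost at most $2 g_f K_f |I|^s$. Dyadic intervals of length $\ge 1$ are harmless, since the single generation-$0$ cylinder $[0,1)$ already covers $A\cap[0,1)$ at cost $1\le |I|^s$.

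To build the local cover I would use condition $(i)$ in the form $|C_{x_1\dots x_{n+1}}|/|C_{x_1\dots x_n}|>1/(g_f K_f)$, which says that the nested cylinders through any point shrink by a factor at most $g_f K_f$ at each step. Combined with $|[0,1)|=1$ and $|C_{x_1\dots x_n}|\to 0$, this guarantees that for every $x\in I\cap[0,1)$ there is a first (hence unique) cylinder $C(x)\ni x$ with $|I|/(g_f K_f)<|C(x)|\le |I|$. Since distinct cylinders are nested or disjoint, the maximal elements of the family $\{C(x):x\in I\cap[0,1)\}$ form a finite disjoint collection $C_1,\dots,C_N$ that still covers $I\cap[0,1)$, with each $|C_j|\le |I|$.

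The main point is then to bound $N$. Ordering the $C_j$ from left to right, at most one can extend past the left endpoint of $I$ and at most one past the right endpoint (a cylinder meeting $I$ and lying strictly left of another such cylinder cannot itself cross $\inf I$), so all but at most two of them are contained in $I$. As these interior cylinders are disjoint and each has length exceeding $|I|/(g_f K_f)$, there are fewer than $g_f K_f$ of them, whence $N<g_f K_f+2\le 2 g_f K_f$, the last step using $g_f K_f\ge 2$ (valid since $g_f\ge 2$ and $K_f>1$). Therefore $\sum_{j}|C_j|^s\le N|I|^s\le 2 g_f K_f|I|^s$, as required.

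Assembling the pieces: given $\delta>0$, choose a dyadic cover $\{I_i\}$ of $A$ with $\sum_i|I_i|^s\le M_\infty^s(A)+\delta$, replace each $I_i$ by its cylinder cover, and sum to obtain an $f$-cylinder cover of $A$ of $s$-cost at most $2 g_f K_f(M_\infty^s(A)+\delta)$; letting $\delta\to 0$ gives $N_\infty^s(A)\le 2 g_f K_f\, M_\infty^s(A)$. The step I expect to require the most care is the counting argument: verifying that the boundary-crossing cylinders really number at most two, and using $g_f K_f\ge 2$ to recover the clean constant $2 g_f K_f$ rather than the cruder $3 g_f K_f$ one gets by merely noting that every $C_j$ lies within distance $|I|$ of $I$.
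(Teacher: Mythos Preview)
Your argument is correct, but it follows a different route from the paper's.

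The paper replaces each dyadic interval $U_i$ by at most \emph{two} cylinders, each possibly \emph{larger} than $U_i$: it takes the minimal generation $k$ for which some generation-$k$ cylinder lies inside $U_i$, picks the largest such cylinder $C_{x_1\dots x_k}$, and observes that its parent $C_{x_1\dots x_{k-1}}$ must overhang an endpoint of $U_i$ (otherwise $k$ would not be minimal). Repeating once on the uncovered side produces a second parent, and the two parents together cover $U_i$. Since each parent has length at most $g_f K_f|U_i|$, the $s$-cost of this two-cylinder cover is at most $2(g_fK_f)^s|U_i|^s\le 2g_fK_f|U_i|^s$.

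You go in the opposite direction: you cover $I\cap[0,1)$ by cylinders all of length at most $|I|$, and then \emph{count} them, getting fewer than $g_fK_f+2\le 2g_fK_f$ by a boundary/interior split. Both arguments land on the same constant via different arithmetic ($2\cdot(g_fK_f)^s$ versus $(g_fK_f+2)\cdot 1^s$). The paper's version is a bit slicker---no counting, no appeal to $g_fK_f\ge 2$---while yours has the mild advantage that the covering cylinders stay no larger than $|I|$, which would matter if one ever needed a diameter-restricted comparison $M_\delta^s$ vs.\ $N_\delta^s$ rather than the $\delta=\infty$ case used here.

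Two small remarks on your write-up. First, taking ``maximal elements'' of $\{C(x)\}$ is unnecessary: by your choice of generation, $y\in C(x)$ forces $C(y)=C(x)$, so the family is already pairwise disjoint. Second, your use of $K_f>1$ is legitimate---the distortion bound in condition~(i) is strict, so $y=z$ already gives $1<K_f$.
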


\begin{proof}
Given a set $A$, let $(U_i)$ be a cover of $A$ by dyadic cylinders. Let $k$ be the smallest generation for which there is a generation $k$ cylinder from the expansion by $f$ contained in $U_i$. Let $C_{x_1\dots x_k}$ be the largest of these cylinders. It is clear that $C_{x_1\dots x_{k-1}}$ covers at least one endpoint of $U_i$. 

If it does not cover the entire $U_i$, let $C_{y_1\dots y_k}$ be the largest generation $k$ cylinder contained in $U_i \setminus C_{x_1\dots x_{k-1}}$. By the minimality of $k$ we know that $C_{y_1\dots y_{k-1}}$ covers the other endpoint of $U_i$. 

Now together, $C_{x_1\dots x_{k-1}}$ and $C_{y_1\dots y_{j-1}}$ cover $U_i$. Indeed, by the minimality of $k$, any cylinder between $C_{x_1\dots x_{k}}$ and $C_{y_1\dots y_{k}}$ must have generation at least $k$. But all such cylinders must belong to some generation $k-1$ cylinder, and since there are none between $C_{x_1\dots x_{k-1}}$ and $C_{y_1\dots y_{k-1}}$, there can be no gap between these two sets.

By condition $(i)$ we have 
\[
 |C_{x_1\dots x_{k-1}}|+|C_{y_1\dots y_{j-1}}| \leq g_f K_f |C_{x_1\dots x_{k}}|+ g_f K_f|C_{y_1\dots y_{j}}| \leq 2 g_f K_f |U_i|.
\]
We can do this for each $i$ so it implies $ \sum_i |U_i|^s \geq \frac{1}{2g_f K_f}N_\infty^s(A)$. Since this holds for all covers we get
\[
M_\infty^s(A)\geq \frac{1}{2 g_f K_f}N_\infty^s(A).
\]
\end{proof}

The following lemma is a version of less Lemma 1 from \cite{falconer} with $N^s$ instead of $M^s$. The proof is almost identical.

\begin{lem}\label{cylindersandintervals}
Let $F \subset [0,1)$ and $0<c\leq 1$. If $I=[a,b)\subset [0,1)$ is such that 
\[
N_\infty^s(F\cap C)\geq c |C|^s
\]
for all cylinders $C$ with respect to the expansion by $f$ contained in $I$, then
\[
N_\infty^s(F\cap I)\geq c |I|^s.
\]
\end{lem}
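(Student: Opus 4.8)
The plan is to show that an arbitrary cover of $F\cap I$ by cylinders must have $s$-cost at least $c|I|^s$. So I would fix a cover $\{C_i\}$ of $F\cap I$ by cylinders with respect to $f$ and aim for $\sum_i |C_i|^s \geq c|I|^s$; taking the infimum over all such covers then yields the claim by the definition of $N_\infty^s$.

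First I would dispose of two reductions. If some cover cylinder contains $I$, then $\sum_i|C_i|^s \geq |C_i|^s \geq |I|^s \geq c|I|^s$ since $c\leq 1$, and we are finished; so assume no cover cylinder contains $I$. Next, since any two cylinders are either nested or disjoint, I would pass to the sub-collection of maximal elements of $\{C_i\}$: these are pairwise disjoint, still cover $F\cap I$, and have $s$-cost no larger than the original, so it suffices to bound the cost of this disjoint cover from below. Because the cylinders are now disjoint and none contains $I$, at most one can straddle the left endpoint $a$ (any two straddling cylinders would both contain $a$) and at most one can straddle the right endpoint $b$; call these $C_a$ and $C_b$ when present. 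Every remaining cover cylinder is then contained in the gap $J := I \setminus (C_a\cup C_b)$, which is again an interval.

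I would then handle the interior interval $J$ by a maximal-cylinder decomposition. Writing $\mathcal D_J$ for the cylinders that are maximal among those contained in $J$, these are disjoint, each is a cylinder contained in $I$, and together they exhaust $J$ up to its left endpoint, so $\sum_{D\in\mathcal D_J}|D| = |J|$. For each $D\in\mathcal D_J$, any cover cylinder meeting $D$ lies in $J$ (it cannot be $C_a$ or $C_b$, which are disjoint from $D$) and, being nested with $D$ yet unable to strictly contain the maximal cylinder $D$, must be contained in $D$; these cylinders cover $F\cap D$. Hence the hypothesis applied to $D\subset I$ gives
\[
\sum_{C_i\subset D}|C_i|^s \ \geq\ N_\infty^s(F\cap D)\ \geq\ c\,|D|^s .
\]
Summing over $D\in\mathcal D_J$ and invoking the superadditivity inequality $\sum_j t_j^s \geq \big(\sum_j t_j\big)^s$, valid for $0<s\leq 1$, I obtain $\sum_{C_i\subset J}|C_i|^s \geq c\sum_{D}|D|^s \geq c\,|J|^s$.

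Finally I would reassemble the three pieces. Since $C_a\cap I$ and $C_b\cap I$ are subintervals of $C_a$ and $C_b$ we have $|C_a|^s\geq |C_a\cap I|^s$ and $|C_b|^s\geq |C_b\cap I|^s$, while the sets $C_a\cap I$, $J$, $C_b\cap I$ partition $I$, so their lengths sum to $|I|$. Combining this with the bound for $J$, using $c\leq 1$ and one further application of superadditivity of $t\mapsto t^s$, yields
\[
\sum_i |C_i|^s \ \geq\ |C_a\cap I|^s + c\,|J|^s + |C_b\cap I|^s \ \geq\ c\big(|C_a\cap I| + |J| + |C_b\cap I|\big)^s \ =\ c\,|I|^s ,
\]
as required. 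The genuine obstacle is precisely this last amalgamation of per-cylinder lower bounds into a bound on all of $I$: the measure $N_\infty^s$ is only subadditive, so piecewise lower bounds do not combine directly. The two devices that resolve it are the reduction to a \emph{disjoint} cover, which confines all oversized, boundary-straddling cylinders to just $C_a$ and $C_b$, and the superadditivity of $t\mapsto t^s$ for $s\leq 1$, which lets the separate $s$-costs be pooled into a single $|I|^s$.
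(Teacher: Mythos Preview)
Your proof is correct and is precisely the natural adaptation of Falconer's Lemma~1 to the cylinder net measure $N_\infty^s$; the paper itself does not give a detailed argument but simply refers to that lemma, saying the proof is almost identical. Your handling of the two possible boundary-straddling cylinders $C_a,C_b$ (which arise here because $I$ need not itself be a cylinder), together with the maximal-cylinder decomposition of the residual interval $J$ and the superadditivity of $t\mapsto t^s$, is exactly the expected route.
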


The following lemma is a modified version of Lemma 7 in \cite{falconer}. 

\begin{lem}\label{oldfalconerlemma}
Let $\{F_k\}_{k=1}^\infty$ be a sequence of open subsets of $\mathbb R$ such that for some $0<s\leq 1$ and $c>0$ we have that
\[
\lim_{k\to \infty} N_\infty^s(F_k\cap C)\geq c |C|^s 
\]
for every cylinder $C$ with respect to the expansion by $f$. Then 
\[
\bigcap_{m=1}^\infty \bigcup_{k=m}^\infty   F_k \in \mathcal G^s.
\]
\end{lem}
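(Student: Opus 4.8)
The plan is to verify the defining covering condition of Falconer's class by constructing, inside the limsup set and inside each cylinder, a Cantor-type subset whose net content I can bound from below. First note that each $\bigcup_{k=m}^\infty F_k$ is open, so $\bigcap_{m=1}^\infty\bigcup_{k=m}^\infty F_k$ is $G_\delta$ and Theorem \ref{falconermain} applies. By Lemma \ref{factorbetweenmeasures} and Lemma \ref{cylindersandintervals} (used at a general exponent), the whole statement reduces to the following: for each $t\in(0,s)$ I must produce a constant $c_t>0$ with
\[
N_\infty^t\Big(\big(\textstyle\bigcap_{m}\bigcup_{k\ge m}F_k\big)\cap C\Big)\ge c_t\,|C|^t
\]
for every cylinder $C$. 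Indeed, the cylinder bound upgrades to dyadic intervals by Lemma \ref{cylindersandintervals}, then to $M_\infty^t$ by Lemma \ref{factorbetweenmeasures}, giving membership in $\mathcal G^t$ for every $t<s$; Remark \ref{dimensionequal} then yields membership in $\mathcal G^s$.

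To build the Cantor set, fix $t\in(0,s)$, a cylinder $C$, and set $\delta=(c/2)^{1/(s-t)}\in(0,1)$. I construct a tree of cylinders with root $C$. Given a node $D$, the hypothesis $\lim_k N_\infty^s(F_k\cap D)\ge c|D|^s$ lets me pick an index $k_D$, as large as I like (and larger than the index of $D$'s parent and than the generation of $D$, so that indices tend to infinity along every branch), with $N_\infty^s(F_{k_D}\cap D)\ge \tfrac{c}{2}|D|^s$. Since $F_{k_D}$ is open and cylinders shrink to points, $F_{k_D}\cap D$ is a union of maximal cylinders contained in $F_{k_D}\cap D$, and their $s$-contents sum to at least $N_\infty^s(F_{k_D}\cap D)\ge\tfrac{c}{2}|D|^s$. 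I then refine these maximal cylinders into sub-cylinders each of diameter at most $\delta|D|$; this is possible by the bounded-distortion control in condition $(i)$, it keeps every piece inside $F_{k_D}\cap D$, and since $s\le1$ it does not decrease the total $s$-content. These refined pieces are the children of $D$. The resulting limit set $E=\bigcap_j\bigcup_{D\in\mathcal E_j}\overline D$ lies in $C$ and, because along each branch the points belong to $F_{k_D}$ for indices $k_D\to\infty$, it lies in $\bigcap_m\bigcup_{k\ge m}F_k$.

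The decisive step is content preservation. For the children $\{D'\}$ of a node $D$ I have $\sum_{D'}|D'|^s\ge\tfrac{c}{2}|D|^s$ and $|D'|\le\delta|D|$, so, writing $|D'|^t=|D'|^s\,|D'|^{t-s}$ and using $t-s<0$,
\[
\sum_{D'}|D'|^t\ \ge\ (\delta|D|)^{t-s}\sum_{D'}|D'|^s\ \ge\ \tfrac{c}{2}\,\delta^{t-s}\,|D|^t=|D|^t ,
\]
the last equality by the choice of $\delta$. Hence the natural mass distribution $\mu$, defined on the tree by $\mu(C)=|C|^t$ and $\mu(D')=\mu(D)\,|D'|^t\big/\sum_{D''}|D''|^t$, satisfies $\mu(D)\le|D|^t$ on every node, \emph{without any accumulating loss}. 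A routine argument, again using the bounded distortion of $(i)$ to compare an arbitrary cylinder $V$ with the largest node containing it and the children it meets, then gives $\mu(V)\le\kappa|V|^t$ for every cylinder $V$, with $\kappa=\kappa(g_f,K_f,\delta)$. The mass distribution principle yields $N_\infty^t(E)\ge\mu(E)/\kappa=|C|^t/\kappa$, so $c_t=\kappa^{-1}$ works.

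The main obstacle is exactly this content bookkeeping: a naive construction carried out at the exponent $s$ retains only the surviving fraction $\tfrac{c}{2}<1$ of the $s$-content at each of infinitely many generations, which drives the $s$-content (and hence the dimension of $E$) strictly below $s$ and gives a useless zero bound. The remedy is to pass to an exponent $t<s$ and to force the children to be geometrically small (diameter $\le\delta|D|$), so that the power-mean inequality above converts the shrunken $s$-content into an \emph{undiminished} amount of $t$-content; this is what makes $\mu(D)\le|D|^t$ hold uniformly in the generation. The remaining points—extracting the maximal cylinders of the open set $F_{k_D}\cap D$, choosing the indices $k_D\to\infty$ branchwise, and controlling $\kappa$ for non-node cylinders via condition $(i)$—are technical but routine.
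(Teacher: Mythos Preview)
Your construction is essentially correct, but it takes a far more laborious route than the paper's. The paper's proof is three lines: for each tail union one has, by monotonicity of the outer measure,
\[
N_\infty^s\Big(\big(\textstyle\bigcup_{k\ge m}F_k\big)\cap C\Big)\ \ge\ N_\infty^s(F_k\cap C)\quad\text{for every }k\ge m,
\]
so passing to the limit gives $N_\infty^s\big((\bigcup_{k\ge m}F_k)\cap C\big)\ge c|C|^s$ for every cylinder $C$; Lemmas~\ref{cylindersandintervals} and~\ref{factorbetweenmeasures} transfer this to $M_\infty^s$ on dyadic intervals, so Theorem~\ref{falconermain} puts each open set $\bigcup_{k\ge m}F_k$ into $\mathcal G^s$; finally Theorem~\ref{intersectionproperty} (closure of $\mathcal G^s$ under countable intersections) gives the $\limsup$.

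What you do instead is bypass Theorem~\ref{intersectionproperty} entirely and produce, by hand, a Cantor subset of the $\limsup$ inside each cylinder, using a tree construction, a $t<s$ exponent drop, and a mass-distribution bound. The content-preservation trick with $\delta=(c/2)^{1/(s-t)}$ is the right idea and the computation is correct. Two points you glossed over are genuinely routine but should be stated: the refinement into children must be \emph{minimal} (stop subdividing as soon as the diameter drops below $\delta|D|$), so that each child has diameter at least $\delta|D|/(g_fK_f)$ by condition~(i), which is exactly what makes your $\kappa$ finite; and the closure/endpoint issues with half-open cylinders affect only countably many points, hence do not change $N_\infty^t$.

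In short, the paper's argument is a one-step reduction to Falconer's intersection theorem; yours is a self-contained replacement for that theorem in this special case. The paper's approach is dramatically shorter, while yours has the merit of being constructive and of working directly at the level of the $\limsup$ set without invoking Theorem~\ref{intersectionproperty}.
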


\begin{proof}
For each $m\in \mathbb N$ and each cylinder $C$ we have
\[
N_\infty^s(\cup_{k=m}^\infty F_k\cap C)\geq \lim_{k\to \infty} N^s_\infty (F_k \cap C)\geq c |C|^s. 
\]
By Lemma \ref{cylindersandintervals} we have 
\[
N_\infty^s(\cup_{k=m}^\infty F_k\cap I)\geq \lim_{k\to \infty} N^s_\infty (F_k \cap I)\geq c |I|^s. 
\]
for all dyadic intervals $I$. By Lemma \ref{factorbetweenmeasures} we get 
\[
M_\infty^s(\cup_{k=m}^\infty F_k\cap I)\geq \frac{1}{2g_f K_f}N_\infty^s(\cup_{k=m}^\infty F_k\cap I)\geq \frac{c}{2g_f K_f} |I|^s
\]
for all dyadic intervals $I$. Then by Theorem \ref{falconermain} we have $\cup_{k=m}^\infty F_k \in \mathcal G^s$ and by Theorem \ref{intersectionproperty} we get
\[
\bigcap_{m=1}^\infty \bigcup_{k=m}^\infty  F_k \in \mathcal G^s.
\]
\end{proof}

We will use Lemma \ref{oldfalconerlemma} to prove the theorems. But to be able to apply Lemma \ref{oldfalconerlemma} we need to prove a couple of propositions. 

\begin{prop}\label{unitintervaltocylinders}
Let $0<s\leq 1$ and $f$ be a function satisfying condition $(i)$. If there is a subsequence $\{M_k\}_{k=1}^\infty$ of the natural numbers and $0\leq c<1$ such that 
\[
N_\infty^s([0,1]\cap G_{\bar p}^{f,m}(M_k,\epsilon))<  \frac{c}{K_f^s} 
\]
for all $k$, then for each cylinder $C\subset [0,1)$, with respect to the expansion by $f$, there exists a number $K_{C}$ such that 
\[
N_\infty^s(C\cap G_{\bar p}^{f,m}(M_k, \epsilon/2))< c|C|^s 
\]
for all $k>K_{C}$.
\end{prop}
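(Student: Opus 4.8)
The plan is to exploit that, for a cylinder $C = C_{x_1\dots x_\ell}$ of generation $\ell$, the map $f^\ell$ restricts to a bijection of $C$ onto $[0,1)$ that acts on symbolic sequences by deleting the first $\ell$ symbols. I would take a near-optimal cover of $[0,1)\cap G_{\bar p}^{f,m}(M_k,\epsilon)$ by cylinders $\{D_j\}$ with $\sum_j |D_j|^s$ just below the hypothesised bound $c/K_f^s$, pull each $D_j$ back through the inverse branch $(f^\ell|_C)^{-1}$ to a cylinder $\tilde D_j \subset C$, and show that $\{\tilde D_j\}$ covers $C \cap G_{\bar p}^{f,m}(M_k, \epsilon/2)$ while $\sum_j |\tilde D_j|^s$ stays below $c|C|^s$. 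Two estimates make this work: a combinatorial comparison of frequencies that yields the covering inclusion, and the bounded distortion estimate that controls the lengths $|\tilde D_j|$.

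For the covering inclusion I would fix $x \in C \cap G_{\bar p}^{f,m}(M_k, \epsilon/2)$ and compare the frequencies of $x$ with those of $y := f^\ell(x)$, whose expansion is $x_{\ell+1} x_{\ell+2}\dots$. The naive shift relates the level-$M_k$ data of $x$ to the level-$(M_k-\ell)$ data of $y$, which would clash with the hypothesis being stated at level $M_k$; so instead I compare both at the same level $M_k$. Writing out $\tau_{w_j}^f(x,M_k)$ and $\tau_{w_j}^f(y,M_k)$ as counts over the integer windows $\{1,\dots,M_k-m\}$ and $\{\ell+1,\dots,\ell+M_k-m\}$ respectively, these windows have equal length and differ by a shift of $\ell$, so their symmetric difference has at most $2\ell$ elements. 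Hence $|\tau_{w_j}^f(y,M_k) - \tau_{w_j}^f(x,M_k)| \le 2\ell$ for every word $w_j$, \emph{independently of the symbols of $x$ beyond position $M_k$}. Dividing by $M_k - m$ and choosing $K_C$ so large that $2\ell/(M_k-m) < \epsilon/2$ for all $k > K_C$, I conclude that the frequencies of $y$ at level $M_k$ lie within $\epsilon$ of $\bar p$, i.e. $f^\ell(x) \in [0,1)\cap G_{\bar p}^{f,m}(M_k,\epsilon)$. Thus $C \cap G_{\bar p}^{f,m}(M_k,\epsilon/2) \subset (f^\ell|_C)^{-1}\big([0,1)\cap G_{\bar p}^{f,m}(M_k,\epsilon)\big)$ for $k > K_C$, so pulling back any cover of the right-hand set gives a cover of the left-hand set.

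For the length estimate I would use condition $(i)$: since $\tilde D_j$ and $C$ are both cylinders with $f^\ell(\tilde D_j) = D_j$ and $f^\ell(C) = [0,1)$, comparing the averages of $|(f^\ell)'|$ over $\tilde D_j$ and over $C$ gives $|\tilde D_j| < K_f |C|\,|D_j|$. Raising to the power $s$ and summing yields
\[
N_\infty^s\big(C \cap G_{\bar p}^{f,m}(M_k,\epsilon/2)\big) \le \sum_j |\tilde D_j|^s < K_f^s |C|^s \sum_j |D_j|^s < K_f^s |C|^s \cdot \frac{c}{K_f^s} = c|C|^s
\]
for all $k > K_C$, which is the claim; note that the distortion factor $K_f^s$ cancels exactly against the $K_f^s$ built into the hypothesis. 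The step I expect to be the crux is the frequency comparison: the obvious shift argument produces level $M_k-\ell$ and does not match the hypothesis, and the resolution is precisely the observation that comparing the two length-$M_k$ windows directly costs only a fixed error $2\ell$ regardless of the undetermined tail symbols, which both preserves the index $M_k$ and is absorbed by the gap between $\epsilon/2$ and $\epsilon$ once $k$ is large.
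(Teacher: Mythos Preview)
Your proof is correct and follows essentially the same strategy as the paper: pull back a near-optimal cylinder cover of $[0,1)\cap G_{\bar p}^{f,m}(M_k,\epsilon)$ through the inverse branch $(f^\ell|_C)^{-1}$, control the lengths via bounded distortion so that the factor $K_f^s$ cancels, and absorb the frequency discrepancy caused by the first $\ell$ symbols into the gap between $\epsilon/2$ and $\epsilon$ once $k$ is large. The one difference is in how you handle the frequency comparison: the paper works at the cylinder level and compares the generation-$(M_k+n)$ frequency in $C$ with the generation-$M_k$ frequency in $[0,1)$, incurring an error of order $n/M_k$ but leaving a small index mismatch between $M_k+n$ and $M_k$ in the final line; you instead compare both counts at the same level $M_k$ and bound their difference by $2\ell$ via the symmetric-difference argument on the two windows, which keeps the index $M_k$ fixed throughout and makes the inclusion $C\cap G_{\bar p}^{f,m}(M_k,\epsilon/2)\subset (f^\ell|_C)^{-1}\big([0,1)\cap G_{\bar p}^{f,m}(M_k,\epsilon)\big)$ completely transparent. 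Your version is the cleaner of the two.
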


\begin{proof}
Let $C$ be a cylinder of generation $n$ and consider the set 
\[
C\cap G_{\bar p}^{f,m}\Big(M_k+n,\epsilon-\frac{n}{M_k}\Big).
\]
To cover this set we need to cover a family of generation $M_k+n$ cylinders. For each of these cylinders, we get a generation $M_k$ cylinder in $[0,1] \cap G_{\bar p}^{f,m}(M_k,\epsilon)$ if we remove the first $n$ symbols in the coding. Indeed, these $n$ symbols cannot affect the frequency more than by a term $\frac{n}{M_k}$. By assumption, there is a cover $(U_i)_{i=1}^\infty$ of $[0,1] \cap G_{\bar p}^{f,m}(M_k,\epsilon)$ with value less than $\frac{c}{K_f}$. For each $U_i$, there is a corresponding interval $\tilde U_i$ in $C$ such that $\tilde U_i$ covers the generation $M_k+n$ cylinders in $C$ that corresponds to the generation $M_k$ cylinders that $U_i$ covers. Since we have bounded distortion, we know that $|\tilde U_i|\leq K_f |C||U_i|$. But $(\tilde U_i)_{i=1}^\infty$ is a cover of $C\cap G_{\bar p}^{f,m}(M_k+n,\epsilon-\frac{n}{M_k})$ so
\[
N_\infty^s(C\cap G_{\bar p}^{f,m}(M_k, \epsilon/2))< K_f^s|C|^s N_\infty^s([0,1] \cap G_{\bar p}^{f,m}(M_k, \epsilon)) \leq c|C|^s
\] 
as long as $k$ is so large that $\frac{n}{M_k}<\frac{\epsilon}{2}$. Thus, for any cylinder $C$, there exists a $K_C$ such that $N_\infty^s\big(C\cap G_{\bar p}^{f,m}(M_k,\epsilon/2)\big)<c|C|^s$ for all $k>K_C$.
\end{proof}

\begin{prop}\label{rightvalueofunitinterval}
Let $f$ be a function satisfying condition $(i)$. Then, for any $m$, $\bar p$ and $\epsilon>0$ it holds that
\[
\lim_{n\to \infty} N_\infty^s([0,1] \cap G_{\bar p}^{f,m}(n,\epsilon))\geq \frac{1}{K_f^s} 
\]
for all $s$ such that $0\leq s<\dim_H(G_{\bar p}^{f,m})$.
\end{prop}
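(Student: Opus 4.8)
\emph{Proof proposal.} The plan is to argue by contradiction and feed Proposition~\ref{unitintervaltocylinders} into an iterated covering scheme that produces covers of the convergence set $G_{\bar p}^{f,m}$ by cylinders of arbitrarily small $s$-content, contradicting $s<\dim_H(G_{\bar p}^{f,m})$. Fix $s$ with $0\le s<\dim_H(G_{\bar p}^{f,m})$ and suppose the conclusion fails. Then there is a constant $c<1$ and an infinite subsequence $(M_k)$ of $\mathbb N$ along which
\[
N_\infty^s\big([0,1]\cap G_{\bar p}^{f,m}(M_k,\epsilon)\big)<\frac{c}{K_f^s}.
\]
This is exactly the hypothesis of Proposition~\ref{unitintervaltocylinders}, so for every cylinder $C$ there is a threshold $K_C$ with $N_\infty^s\big(C\cap G_{\bar p}^{f,m}(M_k,\epsilon/2)\big)<c|C|^s$ for all $k>K_C$. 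I would note that $K_C$ depends on $C$ only through its generation, since the proof of Proposition~\ref{unitintervaltocylinders} only needs $\mathrm{gen}(C)/M_k<\epsilon/2$.

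Next I would reduce to manageable pieces. Because the frequencies genuinely converge on $G_{\bar p}^{f,m}$, each such point eventually lies in the $\epsilon/2$-band; hence, writing $E_N:=\bigcap_{n\ge N}G_{\bar p}^{f,m}(n,\epsilon/2)$, we have $G_{\bar p}^{f,m}\subset\bigcup_{N=1}^\infty E_N$. By the countable stability of Hausdorff dimension it suffices to prove $\dim_H(E_N)\le s$ for each fixed $N$, as this forces $\dim_H(G_{\bar p}^{f,m})\le s$. The decisive feature of $E_N$ is that $E_N\subset G_{\bar p}^{f,m}(n,\epsilon/2)$ for \emph{every} $n\ge N$, not merely for infinitely many $n$, which is what lets a single cover at each stage catch all of $E_N$.

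Fix $N$ and extract from $(M_k)$ an increasing sequence $M_{k_1}<M_{k_2}<\cdots$, all $\ge N$, chosen recursively so that $M_{k_{\ell+1}}$ exceeds the thresholds $K_C$ of every cylinder occurring in the level-$\ell$ cover. At level $1$, applying Proposition~\ref{unitintervaltocylinders} with $C=[0,1)$ gives a cover of $[0,1)\cap G_{\bar p}^{f,m}(M_{k_1},\epsilon/2)$ by cylinders of total $s$-value below $c$, and since $E_N\subset G_{\bar p}^{f,m}(M_{k_1},\epsilon/2)$ this covers $E_N$. At level $\ell+1$ I subdivide each cylinder $C$ of the level-$\ell$ cover by applying the proposition to $C\cap G_{\bar p}^{f,m}(M_{k_{\ell+1}},\epsilon/2)$, obtaining a cover of total value below $c\sum|C|^s$, hence below $c^{\ell+1}$; the new family still covers $E_N$ because $E_N$ lies both in the union of the previous cylinders and in $G_{\bar p}^{f,m}(M_{k_{\ell+1}},\epsilon/2)$. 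The recursion is legitimate because at each stage the covering cylinders can be taken of bounded generation: the band set is a union of whole cylinders of the current generation, and since $t\mapsto t^s$ is subadditive for $s\le 1$ one has $\sum|C'|^s\ge|D|^s$ whenever cylinders $C'$ cover a cylinder $D$, so replacing any sub-cover of such a $D$ by $D$ itself does not increase the sum. Thus $\max_C K_C$ stays finite at each level.

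Letting $\ell\to\infty$ yields $N_\infty^s(E_N)=0$; as cylinders are intervals this gives $\mathcal H^s(E_N)=0$ and hence $\dim_H(E_N)\le s$. Therefore $\dim_H(G_{\bar p}^{f,m})\le s$, contradicting the choice of $s$, which proves the stated lower bound. I expect the main obstacle to be the simultaneous bookkeeping in the induction: guaranteeing that every level of the cover still captures the target set while keeping the generations of the covering cylinders bounded, so that the $k$-dependent threshold of Proposition~\ref{unitintervaltocylinders} can be beaten uniformly across the (finitely generated) cylinders of that level. This is resolved precisely by passing to the sets $E_N$, which sit inside every $\epsilon/2$-band $G_{\bar p}^{f,m}(n,\epsilon/2)$ with $n\ge N$.
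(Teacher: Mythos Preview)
Your argument is correct and follows essentially the same route as the paper's proof: contradiction, feed the failed bound into Proposition~\ref{unitintervaltocylinders}, run an iterated covering scheme with geometrically decaying $s$-value $c^\ell$, and deduce $\dim_H(G_{\bar p}^{f,m})\le s$. Your explicit decomposition into the tail sets $E_N=\bigcap_{n\ge N}G_{\bar p}^{f,m}(n,\epsilon/2)$ and your remark that $K_C$ depends only on the generation of $C$ make the bookkeeping cleaner than in the paper, but the substance is identical; the paper justifies the bounded-generation step with the same subadditivity observation (``using higher generation cylinders does not give a lower value since $s<1$'').
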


\begin{proof}
Assume on the contrary that there exists a $c<1$ and a subsequence $\{M_k\}_{k=1}^\infty$ of the natural numbers such that $N_\infty^s([0,1) \cap G_{\bar p}^{f,m}(M_k,\epsilon))< \frac{c}{K_f^s}$ for all $k$. Then by Proposition \ref{unitintervaltocylinders} we have that for any cylinder $C$ it holds that $N_\infty^s(C \cap G_{\bar p}^{f,m}(M_k,\epsilon/2))< c|C|^s$ for all $k>K_{C}$.

In that case, there is a finite cover by cylinders $\{C_i^1\}_i$ of 
\[
[0,1)\cap G_{\bar p}^{f,m}(M_{k_1},\epsilon/2)
\]
such that $\sum_i |C_i|^s<\frac{c}{K_f}$. Indeed, to attain the value 
\[
N_\infty^s([0,1)\cap G_{\bar p}^{f,m}(M_{k_1},\epsilon/2))<\frac{c}{K_f}
\]
one only has to look among a finite number of covers, all consisting of cylinders of generation at most $M_{k_1}$. Using higher generation cylinders does not give a lower value since $s<1$.

Now, choose $k_2>\max_i\{K_{C_i^1}\}$. Then there are finite covers $\{C_{i,j}\}_j$ of $C_i^1\cap G_{\bar p}^{f,m}(M_{k_2},\epsilon/2)$ such that $\sum_j |C_{i,j}|^s<c|C_i^1|^s$ for all $i$. Let $\{C_i^2\}$ be all the covers of the $C_i^1$ together. This is a cover of 
\[[0,1)\cap G_{\bar p}^{f,m}(M_{k_1},\epsilon/2)\cap G_{\bar p}^{g,m}(M_{k_2},\epsilon/2)
\]
and its value is at most
\[
\sum_i |C_i^2|^s<\sum_i c|C_i^1|^s <\frac{c^2}{K_f^s}.
\]
Continuing in this way we get that 
\[
[0,1) \cap \big(\cap_{i=1}^{\infty}G_{\bar p}^{f,m}(M_{k_i},\epsilon/2)\big)\supset \cap_{n=M_{k_1}}^{\infty}G_{\bar p}^{f,m}(n,\epsilon/2)
\]
can be covered by covers consisting of arbitrarily small cylinders and with arbitrarily small value. It now follows that
\[
\dim_H(\cap_{n=M_{k_1}}^{\infty}G_{\bar p}^{f,m}(n,\epsilon/2))\leq s
\]
for any $k_1>K_{[0,1]}$. But 
\[
\bigcup_{M=1}^{M_{k_1}}\bigcap_{n=M}^\infty G_{\bar p}^{f,m}(n,\epsilon/2))=\bigcap_{n=M_{k_1}}^\infty G_{\bar p}^{f,m}(n,\epsilon/2))
\]
so 
\[
\dim_H(\cup_{M=1}^\infty \cap_{n=M}^{\infty}G_{\bar p}^{f,m}(n,\epsilon/2))\leq s.
\]
Since $G_{\bar p}^{f,m}\subset \bigcup_{M=1}^{\infty} \bigcap_{n=M}^\infty G_{\bar p}^{f,m}(n,\epsilon/2)$, this implies that 
\[
\dim_H(G_{\bar p}^{f,m})\leq \dim_H \Big(\bigcup_{M=1}^{\infty} \bigcap_{n=M}^\infty G_{\bar p}^{f,m}(n,\epsilon/2))\Big)\leq s,
\]
which is a contradiction.
\end{proof}

\begin{prop}\label{rightvalueforcylinders}
Let $f$ be a function satisfying condition $(i)$. Then for each $m$, $\epsilon>0$, $\bar p$ and each cylinder $C\subset [0,1)$,  it holds that
\[
\lim_{n\to \infty} N_\infty^s(C \cap G_{\bar p}^{f,m}(n,\epsilon))\geq \frac{1}{K_f^{2s}}|C|^s
\]
for all $s$ such that $0\leq s<\dim_H(G_{\bar p}^{g,m})$.
\end{prop}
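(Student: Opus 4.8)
The plan is to transfer the lower bound on $[0,1)$ supplied by Proposition \ref{rightvalueofunitinterval} to an arbitrary cylinder $C$, exploiting the self-similarity of the construction together with bounded distortion to keep track of the rescaling. Write $C=C_{x_1\dots x_j}$, so that $f^j$ maps $C$ monotonically onto $[0,1)$ and sends each subcylinder $C_{x_1\dots x_j z_1\dots z_l}\subset C$ onto $C_{z_1\dots z_l}\subset[0,1)$.

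First I would record the distortion estimate for this branch. Since $\int_C |(f^j)'| = |f^j(C)| = 1$, there is a point of $C$ at which $|(f^j)'|\le 1/|C|$, and condition $(i)$ then forces $|(f^j)'(y)| < K_f/|C|$ for every $y\in C$. Hence for any cylinder $C_i\subset C$ its image $\hat C_i := f^j(C_i)$ satisfies $|\hat C_i| < \frac{K_f}{|C|}|C_i|$, so that $|\hat C_i|^s < \frac{K_f^s}{|C|^s}|C_i|^s$.

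Next I would establish the set inclusion that lets me compare covers. If $x\in C$ has coding $(x_1,\dots,x_j,z_1,z_2,\dots)$ then $y=f^j(x)$ has coding $(z_1,z_2,\dots)$; deleting the first $j$ symbols alters the count of each length-$m$ word by at most $j$, and a short computation gives
\[
\left|\frac{\tau_w^f(x,n+j)}{n+j-m}-\frac{\tau_w^f(y,n)}{n-m}\right| \le \frac{2j}{n-m},
\]
which is below $\epsilon/2$ once $n$ is large. Consequently, for all sufficiently large $n$,
\[
[0,1)\cap G_{\bar p}^{f,m}(n,\epsilon/2)\ \subseteq\ f^j\big(C\cap G_{\bar p}^{f,m}(n+j,\epsilon)\big).
\]

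Finally I would run the covering argument. Any cylinder meeting $C\cap G_{\bar p}^{f,m}(n+j,\epsilon)$ either contains $C$, in which case its $s$-value already exceeds $|C|^s\ge \frac{1}{K_f^{2s}}|C|^s$, or is contained in $C$; so I may assume a cover by cylinders $C_i\subset C$. Their images $\hat C_i$ cover the left-hand set above, giving $\sum_i|\hat C_i|^s\ge N_\infty^s([0,1)\cap G_{\bar p}^{f,m}(n,\epsilon/2))$, and the distortion estimate then yields
\[
\sum_i|C_i|^s > \frac{|C|^s}{K_f^s}\,N_\infty^s\big([0,1)\cap G_{\bar p}^{f,m}(n,\epsilon/2)\big).
\]
Taking the infimum over covers, letting $n\to\infty$, and applying Proposition \ref{rightvalueofunitinterval} with $\epsilon/2$ in place of $\epsilon$ (valid for every $s<\dim_H(G_{\bar p}^{f,m})$) produces the stated bound, after reindexing $n+j$ to $n$. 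The only delicate point is the frequency-perturbation estimate guaranteeing the inclusion uniformly in $n$; everything else is bookkeeping with bounded distortion, mirroring the upper-bound argument of Proposition \ref{unitintervaltocylinders} with the inequalities reversed.
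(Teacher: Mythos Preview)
Your proposal is correct and follows essentially the same route as the paper: push a cover of $C\cap G_{\bar p}^{f,m}(n+j,\epsilon)$ forward by the branch $f^j\colon C\to[0,1)$, use bounded distortion to compare $|C_i|^s$ with $|\hat C_i|^s$, absorb the $j$-symbol perturbation into a halved $\epsilon$, and then invoke Proposition~\ref{rightvalueofunitinterval}. Your write-up is in fact more explicit than the paper's, which compresses the distortion and frequency-perturbation steps into the phrase ``reasoning as in the proof of Proposition~\ref{unitintervaltocylinders}''; you also handle the (trivial) case of a covering cylinder containing $C$, which the paper omits.
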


\begin{proof}
Let $C$ be a generation $n$ cylinder and let $(U_i)_{i=1}^\infty$ be a cover of the set $C\cap G_{\bar p}^{f,m}(m+n,\epsilon)$. Reasoning as in the proof of Proposition \ref{unitintervaltocylinders} we conclude that there is a corresponding cover $(\tilde U_i)_{i=1}^\infty$ of $[0,1) \cap G_{\bar p}^{f,m}(m,\epsilon-\frac{n}{m})$ such that $|\tilde U_i|\leq \frac{K_f}{|C|}|U_i|$ for all $i$. Using Proposition \ref{rightvalueofunitinterval} we get
\[
\lim_{n\to \infty} N_\infty^s(C \cap G_{\bar p}^{f,m}(n,\epsilon))\geq \frac{|C|^s}{K_f^s}\lim_{n\to \infty} N_\infty^s([0,1] \cap G_{\bar p}^{f,m}(n,\frac{\epsilon}{2})) \geq \frac{1}{K_f^{2s}}|C|^s.
\]
\end{proof}

\begin{prop}\label{maintheorem}
Let $f$ be a function satisfying condition $(i)$ and $m\in \mathbb N$. If $\bar p$ is such that there is a word $v\neq 0^m$ such that $p_v>0$, then if $0<\epsilon<p_v$, then
\[
\bigcap_{N=1}^\infty \bigcup_{n=N}^\infty G_{\bar p}^{f,m}(n,\epsilon)
\]
is in the class $\mathcal G^s$ for each $s\leq \dim_H(G_{\bar p}^{f,m})$.
\end{prop}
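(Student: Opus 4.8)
The plan is to verify, for
\[
E:=\bigcap_{N=1}^\infty \bigcup_{n=N}^\infty G_{\bar p}^{f,m}(n,\epsilon),
\]
the two requirements of Theorem \ref{falconermain}: that $E$ is a $G_\delta$ set and that it obeys the covering bound (\ref{coveringcondition}) for the given $s$. The hypothesis is exactly what makes the first requirement free: since $v\neq 0^m$ and $0<\epsilon<p_v$ we have $p_v>\epsilon$, so Remark \ref{gopen} shows that $E$ is $G_\delta$. Thus only the covering bound remains, and I would first prove it for every $s$ with $0\le s<\dim_H(G_{\bar p}^{f,m})$, deferring the endpoint $s=\dim_H(G_{\bar p}^{f,m})$ to the end.

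The core of the proof is to combine Proposition \ref{rightvalueforcylinders} with Lemma \ref{oldfalconerlemma}. The obstacle is that Lemma \ref{oldfalconerlemma} is stated for \emph{open} sets, whereas each $G_{\bar p}^{f,m}(k,\epsilon)$ is only a finite union of half-open cylinders. I would therefore pass to $F_k:=\mathrm{int}\big(G_{\bar p}^{f,m}(k,\epsilon)\big)$, which is open and differs from $G_{\bar p}^{f,m}(k,\epsilon)$ only by the countable set of left endpoints of cylinders. Each such endpoint lies in cylinders of every generation, and these shrink to length $0$, so any countable set has $N_\infty^s$-content zero; by monotonicity and subadditivity of $N_\infty^s$ this forces
\[
N_\infty^s(F_k\cap C)=N_\infty^s\big(G_{\bar p}^{f,m}(k,\epsilon)\cap C\big)
\]
for every cylinder $C$. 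Proposition \ref{rightvalueforcylinders} then gives $\lim_{k\to\infty}N_\infty^s(F_k\cap C)\ge K_f^{-2s}|C|^s$ for every cylinder $C$, which is precisely the hypothesis of Lemma \ref{oldfalconerlemma} with $c=K_f^{-2s}$.

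Lemma \ref{oldfalconerlemma} now yields $E':=\bigcap_{m=1}^\infty\bigcup_{k=m}^\infty F_k\in\mathcal G^s$. Since $F_k\subset G_{\bar p}^{f,m}(k,\epsilon)$ for each $k$, we have $E'\subset E$, and I would use this to transport the covering bound upward. By Theorem \ref{falconermain} applied to $E'$ there is $c'>0$ with $M_\infty^s(E'\cap I)\ge c'|I|^s$ for all dyadic $I$; monotonicity of $M_\infty^s$ together with $E'\cap I\subset E\cap I$ then gives $M_\infty^s(E\cap I)\ge c'|I|^s$. As $E$ is $G_\delta$, Theorem \ref{falconermain} applied to $E$ gives $E\in\mathcal G^s$. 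Finally, for $s=\dim_H(G_{\bar p}^{f,m})$ I would observe that $E\in\mathcal G^t$ for every $t<\dim_H(G_{\bar p}^{f,m})$, and since membership in $\mathcal G^t$ means every countable intersection of similar copies of $E$ has dimension at least $t$, letting $t\uparrow\dim_H(G_{\bar p}^{f,m})$ promotes this to $\dim_H(G_{\bar p}^{f,m})$, so that $E\in\mathcal G^{\dim_H(G_{\bar p}^{f,m})}$ as well (cf.\ Remark \ref{dimensionequal}).

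I expect the openness step to be the only delicate point. Everything quantitative is already contained in Proposition \ref{rightvalueforcylinders}, so the remaining work lies in checking that replacing the half-open frequency sets by their interiors changes neither the relevant $N_\infty^s$-values (because the excluded left endpoints are $N_\infty^s$-negligible) nor the inclusion into $E$ (so that the bound can be pulled back). It is exactly here that the assumption $p_v>\epsilon$ is used twice over: through Remark \ref{gopen}, to know $E$ is $G_\delta$, and through the negligibility of the left endpoints that are being discarded.
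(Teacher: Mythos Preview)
Your proof follows the same route as the paper's: Remark \ref{gopen} for the $G_\delta$ property, Proposition \ref{rightvalueforcylinders} fed into Lemma \ref{oldfalconerlemma} for the covering bound, and Remark \ref{dimensionequal} for the endpoint $s=\dim_H(G_{\bar p}^{f,m})$. The paper's argument is terser and does not explicitly address the openness hypothesis of Lemma \ref{oldfalconerlemma}; your passage to interiors $F_k$ and transfer back to $E$ via $E'\subset E$ and monotonicity of $M_\infty^s$ is a correct way to make that step rigorous.
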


\begin{proof}
By Remark \ref{gopen}
\[
\bigcap_{N=1}^\infty \bigcup_{n=N}^\infty G_{\bar p}^{f,m}(n,\epsilon)
\]
is a $G_\delta$ set. Using Proposition \ref{rightvalueforcylinders}, we can now apply Lemma \ref{oldfalconerlemma}. By Remark \ref{dimensionequal} we get the proposition.
\end{proof}

But by Theorem \ref{intersectionproperty} the class $\mathcal G^s$ is closed under intersections so we get

\begin{prop}\label{accumulationthm}
Let $f$ be a function satisfying condition $(i)$ and $m\in \mathbb N$. If $\bar p$ is such that there is a word $v\neq 0^m$ such that $p_v>0$, then for some $K\in \mathbb N$ the set
\[
\bigcap_{k=K}^\infty \limsup_{n\to \infty} G_{\bar p}^{f,m}(n,\frac{1}{k})
\]
is in the class $\mathcal G^s$ for all $s\leq \dim_H(G_{\bar p}^{f,m})$. This means that the set of points $x$ for which $\bar p$ is an accumulation point in $R^{g_f^m}$ of $\Big(\frac{\tau_{w_i}^f(x,n)}{n}\Big)_{i=1}^{g_f^m}$ as $n\to \infty$ is in the  class $\mathcal G^s$ for all $s\leq \dim_H(G_{\bar p}^{f,m})$.
\end{prop}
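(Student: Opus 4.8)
The plan is to derive this statement directly from Proposition \ref{maintheorem} together with the closure of $\mathcal G^s$ under countable intersections, Theorem \ref{intersectionproperty}; the analytic work has already been done upstream, so what remains is to assemble the pieces and identify the resulting set.

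First I would fix $K\in\mathbb N$ large enough that $1/K<p_v$, which is possible precisely because $p_v>0$. Then for every $k\geq K$ we have $0<1/k<p_v$, so Proposition \ref{maintheorem}, applied with $\epsilon=1/k$, shows that
\[
\limsup_{n\to\infty} G_{\bar p}^{f,m}(n,1/k)=\bigcap_{N=1}^\infty \bigcup_{n=N}^\infty G_{\bar p}^{f,m}(n,1/k)
\]
lies in $\mathcal G^s$ for every $s\leq \dim_H(G_{\bar p}^{f,m})$. Since this is a countable family of members of $\mathcal G^s$, Theorem \ref{intersectionproperty} gives that
\[
\bigcap_{k=K}^\infty \limsup_{n\to\infty} G_{\bar p}^{f,m}(n,1/k)\in\mathcal G^s,
\]
which is the first assertion of the proposition.

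It then remains to identify this intersection with the accumulation set $\{\,x:\bar p\in A^{f,m}(x)\,\}$. A point $x$ lies in $\limsup_{n\to\infty} G_{\bar p}^{f,m}(n,1/k)$ exactly when the frequency vector $\big(\tau_{w_i}^f(x,n)/(n-m)\big)_i$ comes within $1/k$ of $\bar p$ in every coordinate for infinitely many $n$. Intersecting over all $k\geq K$ enforces this for arbitrarily small radii $1/k$, and since the small radii are the binding constraints the omitted large radii are irrelevant; this is exactly the assertion that $\bar p$ is an accumulation point of these vectors. Finally, because $m$ is fixed and $\tau_{w_i}^f(x,n)\leq n$, the discrepancy between normalising by $n-m$ and by $n$ is at most $m/(n-m)\to 0$, so both normalisations have the same set of accumulation points and the identification with $\{\,x:\bar p\in A^{f,m}(x)\,\}$ is unaffected.

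I would expect no genuine obstacle here: the substantive content, namely controlling $N_\infty^s$ on cylinders and placing each single $\limsup$ set in $\mathcal G^s$, is carried out in Proposition \ref{maintheorem}. The only steps requiring attention are the choice of $K$ ensuring $1/k<p_v$ for all $k\geq K$ so that Proposition \ref{maintheorem} applies throughout the intersection, and the routine check that passing between the denominators $n$ and $n-m$ leaves the accumulation points unchanged.
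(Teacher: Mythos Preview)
Your proposal is correct and follows exactly the route the paper takes: the paper treats this proposition as an immediate consequence of Proposition~\ref{maintheorem} together with Theorem~\ref{intersectionproperty}, stating only the single sentence ``But by Theorem~\ref{intersectionproperty} the class $\mathcal G^s$ is closed under intersections so we get'' before the proposition. You have simply made explicit the choice of $K$ with $1/K<p_v$ and the identification of the intersection with the accumulation set, details the paper leaves to the reader.
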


We are now ready to prove the theorems stated in the introduction.

\begin{proof}[Proof of Theorem \ref{nonnormalthm}]
The set $\bigcap_{k=K}^\infty \limsup_{n\to \infty} G_{\bar p}^{g,m}(n,1/k)$ contains no $m$-normal numbers as long as $p_w \neq C_w$ for all words $w$ of length $m$. So by condition $(ii)$ and Proposition \ref{accumulationthm} the set of numbers that are not $m$-normal to $f$ contains a set from the class $\mathcal G^s$ for every $s<1$. By Theorem \ref{intersectionproperty} the class $\mathcal G^s$ is closed under countable intersections, so Theorem \ref{nonnormalthm} follows. 
\end{proof}

\begin{proof}[Proof of Theorem \ref{extremelynonormal}]
Given a word $w$, condition (ii) assures that there are $\bar p$ and $\bar q$ that differ at the position corresponding to the word $w$ while both $\bigcap_{k=K}^\infty \limsup_{n\to \infty} G_{\bar p}^{g,m}(n,1/k)$ and $\bigcap_{k=K}^\infty \limsup_{n\to \infty} G_{\bar q}^{g,m}(n,1/k)$ are in the class $\mathcal G^s$ for $s$ arbitrarily close to $1$. Intersecting these sets we get points for which the frequency of $w$ does not exist. By Theorem \ref{intersectionproperty} we can intersect between such sets corresponding to different words and still be in the class $\mathcal G^s$. Full dimension now follows immediately.
\end{proof}

\begin{proof}[Proof of Theorem \ref{intersectiontheorem}]
We first note that if for some $\bar p_i$ we have $p_{i,w}=0$ for all words $w$ except $0^{m_i}$, then 
\[
\dim_H \Big(\Big\{\, x: \bar p_i \in A^{f_i, m_i}(x) \, \Big\}\Big)=0,
\]
so the statement of the theorem is trivially satisfied. After noting this, the theorem follows from Proposition \ref{accumulationthm} and Theorem \ref{intersectionproperty}.
\end{proof}

\subsection{$\boldsymbol \beta$-shifts where the expansion of 1 terminates}

In this section we prove Theorems \ref{betaextremelynonormal} and \ref{betaintersectiontheorem}. The methods and most of the proofs in this section are almost identical to those of Section \ref{fullshiftproof}. Let $N^s$ be the outer measure defined as
\[
N_\infty^s(F)=\inf \Big\{\, \sum_{i=1}^\infty |C_i|^s:F\subset \cup_{i=1}^\infty C_i \, \Big\}
\]
where each $C_i$ is a cylinder with respect to the expansion by $f_\beta$. 

\begin{lem}\label{betafactorbetweenmeasures}
For each $\beta$, for which the expansion of $1$ terminates, and any set $A\subset [0,1)$ we have
\[
M_\infty^s(A)\geq \frac{1}{2\beta C_\beta^2} N_\infty^s(A).
\]
\end{lem}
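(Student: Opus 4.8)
The plan is to repeat the argument of Lemma \ref{factorbetweenmeasures} essentially verbatim, with the full-shift ratio bound $\tfrac{1}{g_f K_f}$ replaced by the subshift-of-finite-type ratio bound $\tfrac{1}{\beta C_\beta}$ available here. First I would fix a set $A\subset[0,1)$ and an arbitrary cover $(U_i)$ of $A$ by dyadic intervals, and show how to replace each $U_i$ by at most two $\beta$-cylinders whose union contains $U_i$ and whose lengths are controlled by $|U_i|$. Taking the infimum over all dyadic covers of $A$ then converts the resulting cover estimate into the desired inequality between $M_\infty^s$ and $N_\infty^s$.

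For a single $U_i$, let $k$ be the smallest generation for which some $\beta$-cylinder of generation $k$ is contained in $U_i$, and let $C=[x_0\cdots x_{k-1}]$ be the largest such cylinder. Its parent $[x_0\cdots x_{k-2}]$ has generation $k-1$, so by minimality of $k$ it is not contained in $U_i$ and therefore covers at least one endpoint of $U_i$. If it does not cover all of $U_i$, I repeat the construction inside $U_i\setminus[x_0\cdots x_{k-2}]$ to obtain a second generation-$(k-1)$ cylinder covering the other endpoint. The no-gap step is identical to the full-shift case: any cylinder lying strictly between the two chosen parents would be a cylinder of generation at most $k-1$ contained in $U_i$, contradicting the minimality of $k$; hence the two parents are adjacent intervals and their union covers $U_i$.

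The length estimate uses the stated bound $\tfrac{|[x_0\cdots x_n]|}{|[x_0\cdots x_{n-1}]|}>\tfrac{1}{\beta C_\beta}$, i.e.\ every cylinder has length at most $\beta C_\beta$ times that of any of its children, uniformly over all parent--child pairs regardless of how many children the parent has. Since the chosen generation-$k$ cylinder is contained in $U_i$, it has length at most $|U_i|$, so each of the two covering parents has length at most $\beta C_\beta|U_i|$. Recalling that the distortion constant may be taken to satisfy $C_\beta\ge1$ (so that $\beta C_\beta\ge1$ and $(\beta C_\beta)^s\le\beta C_\beta\le\beta C_\beta^2$ for $0<s\le1$), summing $s$-th powers over $i$ gives
\[
N_\infty^s(A)\le\sum_i 2(\beta C_\beta)^s|U_i|^s\le 2\beta C_\beta^2\sum_i|U_i|^s .
\]
Taking the infimum over all dyadic covers of $A$ yields $M_\infty^s(A)\ge\tfrac{1}{2\beta C_\beta^2}N_\infty^s(A)$, which is the claim.

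The only point needing genuine verification — the main difference from the full-shift Lemma \ref{factorbetweenmeasures} — is that the covering and gap-filling steps survive the passage from a full shift to the subshift of finite type $S_\beta$. This reduces to the fact that, for $\beta$ whose expansion of $1$ terminates, the admissible words of length $n$ determine non-empty cylinders that still partition $[0,1)$ (the forbidden words merely produce empty cylinders), so consecutive cylinders of a fixed generation remain genuinely adjacent with no gaps between them. Once this is checked, the unequal branch lengths introduced by the forbidden transitions are harmless, since the ratio bound $\tfrac{1}{\beta C_\beta}$ applies uniformly to every admissible parent--child pair.
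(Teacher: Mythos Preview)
Your proof is correct and follows exactly the paper's approach: the paper's own proof simply says to repeat the argument of Lemma~\ref{factorbetweenmeasures} with $2g_fK_f$ replaced by $2\beta C_\beta$. Your write-up is in fact more careful than the paper's, since you explicitly verify that adjacency of consecutive cylinders survives the passage to the subshift of finite type and you track the harmless extra factor of $C_\beta$ needed to match the stated constant $2\beta C_\beta^2$.
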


\begin{proof}
Take the proof of Lemma \ref{factorbetweenmeasures} and replace $2g_fK_f$ by $2 \beta C_\beta$.
\end{proof}

The following lemma is a version of Lemma 1 from \cite{falconer} with $N^s$ instead of $M^s$. The proof is almost identical.

\begin{lem}\label{betacylindersandintervals}
Let $F \subset [0,1)$ and $0<c\leq 1$. If $I=[a,b)\subset [0,1)$ is such that 
\[
N_\infty^s(F\cap C)\geq c |C|^s
\]
for all cylinders $C$ with respect to the expansion by $f_\beta$ contained in $I$, then
\[
N_\infty^s(F\cap I)\geq c |I|^s.
\]
\end{lem}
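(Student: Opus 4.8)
The plan is to mirror the argument for the full-shift Lemma~\ref{cylindersandintervals}, replacing the $f$-cylinders by the $f_\beta$-cylinders; the only structural facts I would use about the latter are that they are half-open subintervals of $[0,1)$ which, at each generation, partition $[0,1)$ and are nested across generations, so that any two cylinders are either disjoint or one contains the other. Fix $I=[a,b)$ satisfying the hypothesis and let $\{U_i\}$ be an arbitrary cover of $F\cap I$ by $f_\beta$-cylinders; it suffices to show $\sum_i|U_i|^s\ge c|I|^s$ and then pass to the infimum over covers. Throughout I would lean on the elementary subadditivity $(x+y)^s\le x^s+y^s$, valid for $0<s\le 1$, used both as $x^s+y^s\ge(x+y)^s$ and, for a disjoint family of cylinders $D$ tiling a cylinder-aligned interval, as $\sum_D|D|^s\ge(\sum_D|D|)^s$.

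First I would dispose of the cover elements not contained in $I$. If some $U_i\supseteq I$ then $|U_i|^s\ge|I|^s\ge c|I|^s$ and we are done, so I assume otherwise. A cylinder meeting $I$ but not contained in it must cross the endpoint $a$ or the endpoint $b$; all left-crossing cylinders contain $a$ and are therefore nested, and similarly on the right, so after discarding the redundant (contained) ones I may keep at most one left-crossing cylinder $U_L=[l_L,r_L)$ and one right-crossing cylinder $U_R=[l_R,r_R)$, the reduced family still covering $F\cap I$. If $[a,r_L)$ and $[l_R,b)$ overlap then $U_L\cup U_R\supseteq I$, whence $|U_L|^s+|U_R|^s\ge(|U_L|+|U_R|)^s\ge|I|^s$ and we are done. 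Otherwise I set $J=[r_L,l_R)$; since $r_L$ and $l_R$ are cylinder endpoints, $J$ is cylinder-aligned and thus a finite disjoint union $J=\bigsqcup_k D'_k$ of maximal $f_\beta$-cylinders, each contained in $I$, while $U_L$ and $U_R$ are disjoint from $J$. Hence the cylinders of the cover that lie inside $I$ already cover $F\cap J$, and each such inside cylinder meeting $J$ must lie within a single $D'_k$: if it strictly contained some $D'_k$ it would contain the parent of a maximal cylinder and so leave $I$, contradicting that it is an inside cylinder. Grouping the inside cylinders by the $D'_k$ containing them and applying the hypothesis on each $D'_k\subseteq I$ gives
\[
\sum_{U_i\subseteq I}|U_i|^s\ \ge\ \sum_k N_\infty^s(F\cap D'_k)\ \ge\ c\sum_k|D'_k|^s\ \ge\ c\,|J|^s ,
\]
and combining this with $|U_L|^s\ge(r_L-a)^s$, $|U_R|^s\ge(b-l_R)^s$ and one more use of subadditivity yields $\sum_i|U_i|^s\ge c\big((r_L-a)^s+(b-l_R)^s+|J|^s\big)\ge c|I|^s$, since $|I|=(r_L-a)+|J|+(b-l_R)$.

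I expect the main obstacle to be bookkeeping at the boundary of $I$, rather than anything deep: the interval $I$ need not be cylinder-aligned, so a faithful decomposition of the cover over the maximal sub-cylinders $D'_k$ is blocked both by cylinders that straddle the two endpoints of $I$ and by large inside cylinders that might span several $D'_k$. The reductions above are designed exactly to remove these obstructions — the crossing cylinders are collapsed to the single pair $U_L,U_R$ and absorbed through $x^s+y^s\ge(x+y)^s$, and the straddling inside cylinders are ruled out by maximality — after which the additive structure of $N_\infty^s$ over the aligned piece $J$ is all that remains. The transfer to the $\beta$-shift changes none of this, because the only property invoked is the nested half-open interval structure of the cylinders, which the subshift of finite type $S_\beta$ shares with the full shift.
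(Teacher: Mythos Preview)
Your proposal is correct and is precisely the Falconer Lemma~1 argument that the paper invokes: the paper gives no explicit proof here, merely noting that it is ``almost identical'' to Lemma~1 of \cite{falconer} with $N_\infty^s$ in place of $M_\infty^s$, and your endpoint-reduction plus decomposition of the cylinder-aligned middle piece $J$ into maximal subcylinders is exactly that argument. The only structural input you use---that $f_\beta$-cylinders are half-open, nested-or-disjoint, and refine to a finite partition at each generation---is available for $\beta$ with terminating expansion of $1$, so the transfer goes through as you say.
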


\begin{lem}\label{betaoldfalconerlemma}
Let $\{F_k\}_{k=1}^\infty$ be a sequence of open subsets of $\mathbb R$ such that for some $0<s\leq 1$ and $c>0$ we have that
\[
\lim_{k\to \infty} N_\infty^s(F_k\cap C)\geq c |C|^s 
\]
for every cylinder $C$ with respect to the expansion by $f_\beta$. Then 
\[
\bigcap_{m=1}^\infty \bigcup_{k=m}^\infty   F_k \in \mathcal G^s.
\]
\end{lem}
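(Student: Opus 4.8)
The plan is to carry over the argument for Lemma~\ref{oldfalconerlemma} essentially verbatim, replacing each full-shift tool by its $\beta$-shift counterpart: Lemma~\ref{cylindersandintervals} becomes Lemma~\ref{betacylindersandintervals}, and Lemma~\ref{factorbetweenmeasures} becomes Lemma~\ref{betafactorbetweenmeasures}. Since the expansion of $1$ terminates, $S_\beta$ is a subshift of finite type and the bounded-ratio inequality $\frac{|C_{x_0\dots x_n}|}{|C_{x_0\dots x_{n-1}}|}>\frac{1}{\beta C_\beta}$ holds; this is exactly the geometric input those two lemmas require, so both are available here with the same proofs as in Section~\ref{fullshiftproof}.

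First I would fix $m\in\mathbb N$ and a cylinder $C$ with respect to the expansion by $f_\beta$. Since $F_k\cap C\subset\bigl(\bigcup_{k'=m}^\infty F_{k'}\bigr)\cap C$ for every $k\geq m$, monotonicity of $N_\infty^s$ gives $N_\infty^s\bigl(\bigcup_{k=m}^\infty F_k\cap C\bigr)\geq N_\infty^s(F_k\cap C)$; letting $k\to\infty$ and using the hypothesis yields
\[
N_\infty^s\Bigl(\bigcup_{k=m}^\infty F_k\cap C\Bigr)\geq c\,|C|^s
\]
for every such cylinder $C$. Next I would apply Lemma~\ref{betacylindersandintervals} to promote this from cylinders to every dyadic interval $I$, obtaining $N_\infty^s\bigl(\bigcup_{k=m}^\infty F_k\cap I\bigr)\geq c\,|I|^s$, and then Lemma~\ref{betafactorbetweenmeasures} to pass to the dyadic outer measure,
\[
M_\infty^s\Bigl(\bigcup_{k=m}^\infty F_k\cap I\Bigr)\geq\frac{1}{2\beta C_\beta^2}\,N_\infty^s\Bigl(\bigcup_{k=m}^\infty F_k\cap I\Bigr)\geq\frac{c}{2\beta C_\beta^2}\,|I|^s .
\]

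Finally, since each $F_k$ is open the union $\bigcup_{k=m}^\infty F_k$ is open and hence $G_\delta$, so the covering estimate above lets me invoke Theorem~\ref{falconermain} to conclude $\bigcup_{k=m}^\infty F_k\in\mathcal G^s$ for every $m$. Because $\mathcal G^s$ is closed under countable intersections (Theorem~\ref{intersectionproperty}), it follows that $\bigcap_{m=1}^\infty\bigcup_{k=m}^\infty F_k\in\mathcal G^s$, which is the claim. I expect no genuine obstacle here: the only conceptual work, namely transferring Falconer's covering lemma and the comparison between $M_\infty^s$ and $N_\infty^s$ to the finite-type setting, has already been isolated into Lemmas~\ref{betacylindersandintervals} and~\ref{betafactorbetweenmeasures}, so the remaining argument is just routine bookkeeping of these three estimates. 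The one point worth checking is that the constant in Lemma~\ref{betafactorbetweenmeasures} is finite and independent of $m$ and $I$, which holds because $\beta$ and $C_\beta$ are fixed once $\beta$ is chosen.
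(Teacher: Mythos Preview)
Your proposal is correct and follows essentially the same route as the paper: the paper's own proof simply instructs one to repeat the argument of Lemma~\ref{oldfalconerlemma} with $2g_fK_f$ replaced by the $\beta$-shift constant and with the reference to Lemma~\ref{cylindersandintervals} replaced by Lemma~\ref{betacylindersandintervals}, which is exactly what you do. Your write-up is in fact slightly more careful, since you make explicit that $\bigcup_{k=m}^\infty F_k$ is open (hence $G_\delta$) before invoking Theorem~\ref{falconermain}, and you quote the constant $\tfrac{1}{2\beta C_\beta^2}$ from Lemma~\ref{betafactorbetweenmeasures} directly.
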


\begin{proof}
Take the proof of Lemma \ref{oldfalconerlemma}, replace $2g_f K_f$ by $2 \beta C_\beta$ and replace the reference to \ref{cylindersandintervals} by \ref{betacylindersandintervals}.
\end{proof}

We will use Lemma \ref{betaoldfalconerlemma} to prove the theorems. But to be able to apply Lemma \ref{betaoldfalconerlemma} we have to prove some propositions.

First we note that among the cylinders with respect to $f_\beta$, there are many that are scalings of $[0,1)$. Indeed, for any cylinder $C_{x_0 \dots x_{n-1}}$ there is a maximal number $l$ such that $C_{x_0 \dots x_{n-1}}=C_{x_0 \dots x_{n-1}0^l}$. By the maximality of $l$, we know that the cylinder $C_{x_0 \dots x_{n-1}0^l 1}$ is nonempty. This implies that any word in $S_\beta$ can follow after $x_0 \dots x_{n-1} 0^{l+1}$ and thereby $f^{n+l+1}(C_{x_0 \dots x_{n-1}0^{l+1}})=[0,1)$, \mbox{i.e.} $C_{x_0 \dots x_{n-1}0^{l+1}}$ is a scaling of $[0,1)$ by a factor $\beta^{-(n+l+1)}$. We also note that 
\[
|C_{x_0 \dots x_{n-1} 0^{l+1}}|\geq \frac{1}{\beta}|C_{x_0 \dots x_{n-1}}|.
\]

\begin{prop}\label{betaunitintervaltospecialcylinders}
Let $0<s\leq 1$ and $\beta\in (1,2)$ be such that the expansion of $1$ terminates. If there is a subsequence $\{M_k\}_{k=1}^\infty$ of the natural numbers and $0\leq c<1$ such that 
\[
N_\infty^s([0,1]\cap G_{\bar p}^{\beta,m}(M_k,\epsilon))< c 
\]
for all $k$, then for each cylinder $C$ with respect to $f_\beta$ such that $C$ is a scaling of $[0,1)$, there exists a number $K_{C}$ such that 
\[
N_\infty^s(C\cap G_{\bar p}^{\beta,m}(M_k, \epsilon/2))< c|C|^s 
\]
for all $k>K_{C}$.
\end{prop}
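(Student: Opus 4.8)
The plan is to mirror the proof of Proposition \ref{unitintervaltocylinders}, the one essential change being that the hypothesis that $C$ is a scaling of $[0,1)$ upgrades the bounded-distortion estimate to an exact one; this is precisely why the distortion constant disappears from both the hypothesis ($N_\infty^s<c$ rather than $N_\infty^s<c/K_f^s$) and from the conclusion. Write $N$ for the generation of $C$, so that $f_\beta^N$ maps $C$ bijectively and by an exact scaling of ratio $\beta^{-N}=|C|$ onto $[0,1)$. The role of the scaling hypothesis is that after the defining prefix of $C$ every $S_\beta$-admissible continuation is allowed; in the subshift of finite type $S_\beta$ this fails for a generic cylinder, and it is exactly this failure that forces the restriction to scaling cylinders here.

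First I would consider the set $C\cap G_{\bar p}^{\beta,m}(M_k+N,\epsilon-\tfrac{N}{M_k})$, a union of generation-$(M_k+N)$ cylinders lying in $C$. Deleting the first $N$ symbols from the code of each such cylinder produces a generation-$M_k$ cylinder, and, exactly as in Proposition \ref{unitintervaltocylinders}, these $N$ symbols perturb the frequency of any length-$m$ word by at most $N/M_k$; hence each resulting cylinder lies in $[0,1)\cap G_{\bar p}^{\beta,m}(M_k,\epsilon)$. Because $C$ is an exact scaling of $[0,1)$, this shift-by-$N$ correspondence $f_\beta^N|_C$ is an exact similarity of ratio $|C|$, so any cylinder cover of $[0,1)\cap G_{\bar p}^{\beta,m}(M_k,\epsilon)$ pulls back to a cylinder cover of $C\cap G_{\bar p}^{\beta,m}(M_k+N,\epsilon-\tfrac{N}{M_k})$ whose intervals are scaled in length by exactly $|C|$, with no distortion factor.

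Next, using the hypothesis I would pick a cylinder cover $(U_i)$ of $[0,1)\cap G_{\bar p}^{\beta,m}(M_k,\epsilon)$ with $\sum_i|U_i|^s<c$, pull it back to $(\tilde U_i)$ inside $C$ with $|\tilde U_i|=|C|\,|U_i|$, and conclude
\[
N_\infty^s\big(C\cap G_{\bar p}^{\beta,m}(M_k,\epsilon/2)\big)\leq \sum_i|\tilde U_i|^s=|C|^s\sum_i|U_i|^s<c\,|C|^s,
\]
where the first inequality uses the containment $C\cap G_{\bar p}^{\beta,m}(M_k,\epsilon/2)\subseteq C\cap G_{\bar p}^{\beta,m}(M_k+N,\epsilon-\tfrac{N}{M_k})$, valid as soon as $N/M_k<\epsilon/2$. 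Choosing $K_C$ so that $M_k>2N/\epsilon$ whenever $k>K_C$ then yields the claim for all $k>K_C$.

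The step needing the most care is not the measure estimate but the verification that the shift-by-$N$ map is a genuine bijection between the admissible continuations inside $C$ and all of $S_\beta$; this is exactly what the scaling hypothesis supplies, through the observation recorded just before the proposition that $f_\beta^{N}(C)=[0,1)$. Everything else, namely the $N/M_k$ frequency bookkeeping and the passage from cylinder covers to $N_\infty^s$, is identical to the full-shift case, with the single simplification that the exact similarity replaces the factor $K_f$.
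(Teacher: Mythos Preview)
Your proposal is correct and follows exactly the route the paper takes: the paper's own proof consists of the single sentence ``Take the proof of Proposition~\ref{unitintervaltocylinders} and replace the use of `any cylinder' by `any cylinder that is a scaling of $[0,1)$', and replace $K_f$ by $1$,'' and you have unpacked precisely this, correctly identifying that the scaling hypothesis turns the bounded-distortion estimate $|\tilde U_i|\leq K_f|C||U_i|$ into the exact equality $|\tilde U_i|=|C||U_i|$, which is why the constant $K_f$ disappears. Your remark that the scaling hypothesis is needed to guarantee that every $S_\beta$-admissible word can follow the defining prefix of $C$ (so that the shift-by-$N$ bijection really lands on all of $[0,1)$) is exactly the observation the paper records just before the proposition.
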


\begin{proof}
Take the proof of Proposition \ref{unitintervaltocylinders} and replace the use of ''any cylinder'' by ''any cylinder that is a scaling of $[0,1)$'', and replace $K_f$ by $1$.
\end{proof}

\begin{prop}\label{betaunitintervaltocylinders}
Let $0<s\leq 1$ and $\beta\in (1,2)$ be such that the expansion of $1$ terminates. Let $Q(s,\beta)=\frac{1}{\beta^s}\sum_{i=0}^{\infty} (1-\frac{1}{\beta})^{is}$. If there is a subsequence $\{M_k\}_{k=1}^\infty$ of the natural numbers and $0\leq c<1$ such that 
\[
N_\infty^s([0,1]\cap G_{\bar p}^{\beta,m}(M_k,\epsilon))< \frac{c}{2Q(s,\beta)} 
\]
for all $k$, then for each cylinder $C$ with respect to $f_\beta$, there exists a number $K_{C}$ such that 
\[
N_\infty^s(C\cap G_{\bar p}^{\beta,m}(M_k, \epsilon/2))< c|C|^s 
\]
for all $k>K_{C}$.
\end{prop}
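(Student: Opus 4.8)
The plan is to reduce the case of an arbitrary cylinder to the case of cylinders that are scalings of $[0,1)$, since the latter is already handled by Proposition \ref{betaunitintervaltospecialcylinders}, by decomposing $C$ into countably many full cylinders. The tool for this is precisely the structural observation recorded just before Proposition \ref{betaunitintervaltospecialcylinders}: for any cylinder there is a sub-cylinder, obtained by appending a block $0^{l+1}$, that is a scaling of $[0,1)$ and has length at least $\frac1\beta$ times the length of the original cylinder.

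First I would build the decomposition. Given $C=C_{x_0\dots x_{n-1}}$, let $l$ be maximal with $C=C_{x_0\dots x_{n-1}0^l}$; appending the next symbol splits $C$ into exactly the two nonempty cylinders $C_{x_0\dots x_{n-1}0^{l+1}}$ and $C_{x_0\dots x_{n-1}0^l1}$ (the first is a scaling of $[0,1)$ by the cited observation, the second is nonempty by the maximality of $l$, and since the alphabet is $\{0,1\}$ these two partition $C$). Writing $D_0:=C_{x_0\dots x_{n-1}0^{l+1}}$ for the full part and $R_1:=C_{x_0\dots x_{n-1}0^l1}$ for the single remaining cylinder, we have $|R_1|\le(1-\tfrac1\beta)|C|=\tfrac{\beta-1}{\beta}|C|$. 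Iterating on $R_1$, and so on, produces a partition $C=\bigsqcup_{i=0}^\infty D_i$ into full cylinders together with remainders $R_i$ satisfying $R_0=C$, $D_i\subset R_i$, $R_{i+1}=R_i\setminus D_i$ and $|R_{i+1}|\le\tfrac{\beta-1}{\beta}|R_i|$. Hence $|D_i|\le|R_i|\le\big(\tfrac{\beta-1}{\beta}\big)^i|C|$, so that, since $0<s\le1$ and $1<\beta<2$ give $\beta^s\le\beta<2$,
\[
\sum_{i=0}^\infty |D_i|^s \le |C|^s\sum_{i=0}^\infty \Big(\frac{\beta-1}{\beta}\Big)^{is} = \beta^s\, Q(s,\beta)\,|C|^s \le 2\,Q(s,\beta)\,|C|^s.
\]

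Next I would transfer the full-cylinder estimate. The hypothesis $N_\infty^s([0,1]\cap G_{\bar p}^{\beta,m}(M_k,\epsilon))<\frac{c}{2Q(s,\beta)}$ lets me apply Proposition \ref{betaunitintervaltospecialcylinders} with its constant taken to be $\frac{c}{2Q(s,\beta)}$, which lies in $[0,1)$ because $Q(s,\beta)\ge\beta^{-s}\ge\beta^{-1}>\tfrac12$ gives $2Q(s,\beta)>1$. Thus for each full cylinder $D_i$ there is a threshold $K_{D_i}$ with $N_\infty^s(D_i\cap G_{\bar p}^{\beta,m}(M_k,\epsilon/2))<\frac{c}{2Q(s,\beta)}|D_i|^s$ for all $k>K_{D_i}$. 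Since $N_\infty^s$ is countably subadditive and the $D_i$ cover $C\cap G_{\bar p}^{\beta,m}(M_k,\epsilon/2)$, summing these bounds would give the claim at once were it not for the thresholds $K_{D_i}$ being unbounded over the infinitely many pieces.

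The main obstacle is therefore this bookkeeping across infinitely many pieces, which I would resolve with a head--tail split. Set $\eta:=c\big(1-\tfrac{\beta^s}{2}\big)>0$ and choose $I$ so large that $\sum_{i>I}\big(\tfrac{\beta-1}{\beta}\big)^{is}<\eta$. For the finitely many pieces $i\le I$ I use the Proposition's bound once $k$ exceeds $K_C:=\max_{i\le I}K_{D_i}$, and for the tail $i>I$ I use the trivial estimate $N_\infty^s(D_i\cap G_{\bar p}^{\beta,m}(M_k,\epsilon/2))\le|D_i|^s$. Combining,
\[
N_\infty^s\big(C\cap G_{\bar p}^{\beta,m}(M_k,\epsilon/2)\big)\le \frac{c}{2Q(s,\beta)}\sum_{i\le I}|D_i|^s+\sum_{i>I}|D_i|^s< \frac{c\,\beta^s}{2}|C|^s+\eta|C|^s = c|C|^s
\]
for all $k>K_C$, which is exactly the asserted bound. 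I expect the only genuinely delicate points to be verifying that each remainder $R_i$ is a single cylinder whose relative length is controlled by $\frac{\beta-1}{\beta}$, and keeping the constant $2Q(s,\beta)$ consistent throughout the summation.
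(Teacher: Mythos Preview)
Your proof is correct and follows essentially the same route as the paper: decompose $C$ into full sub-cylinders via the structural observation preceding Proposition~\ref{betaunitintervaltospecialcylinders}, apply that proposition to finitely many of them, and absorb the remaining piece by a trivial bound. Your tracking of the factor $\beta^s\le 2$ (which is precisely what the ``$2$'' in $2Q(s,\beta)$ is there to absorb) is in fact a bit more explicit than the paper's own argument.
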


\begin{proof}
Let $C_{x_0 \dots x_{n-1}}$ be a cylinder with respect to $f_\beta$. Then there is a number $l$ such that $C_{x_0\dots x_{n-1}}=C_{x_0 \dots x_{n-1}0^l}$ and $|C_{x_0 \dots x_{n-1}0^{l+1}}|\geq \frac{1}{\beta}|C_{x_0 \dots x_{n-1}}|$, where $C_{x_0 \dots x_{n-1}0^{l+1}}$ is a scaling of $[0,1)$. Starting with the cylinder $C_{x_0 \dots x_{n-1} 0^{l}1}$ we can repeat this argument. Repeating it $j$ times we can split $C_{x_0 \dots x_{n-1}}$ into $j+1$ cylinders, all but the last being scalings of $[0,1)$. Covering $C_{x_0 \dots x_{n-1}}$ with these we get at most the value
\[
|C_{x_0 \dots x_{n-1}}|^s\frac{1}{\beta^s}\sum_{i=0}^{j} (1-\frac{1}{\beta})^{is}<|C_{x_0 \dots x_{n-1}}|^sQ(s,\beta)<\infty
\]
for all $j$. Given $\delta>0$ we can choose $j$ so large that the cylinder not being a scaling of $[0,1)$ contributes less than $\delta$ to the value of the cover. By the assumptions of this proposition and by Proposition \ref{betaunitintervaltospecialcylinders}, we can find a constant $K_{C_{x_0 \dots x_{n-1}}}$ such that for any $C_i$ of the $j$ cylinders that are scalings of $[0,1)$ we have 
\[
N_\infty^s(C_i\cap G_{\bar p}^{\beta,m}(M_k, \epsilon/2))< \frac{c}{2Q(s,\beta)}|C|^s 
\]
for all $k>K_{C_{x_0 \dots x_{n-1}}}$. Since $\sum_{i=1}^{j} |C_i|^s \leq |C_{x_0 \dots x_{n-1}}|^s Q(s, \beta)$ we get 
\begin{align*}
N_\infty^s(C_{x_0 \dots x_{n-1}}\cap G_{\bar p}^{\beta,m}(M_k, \epsilon/2))<\,\, & |C_{x_0 \dots x_{n-1}}|^s Q(s, \beta) \frac{c}{2Q(s,\beta)}+\delta \\  <\, \, & c|C_{x_0 \dots x_{n-1}}|^s 
\end{align*}
for all $k>K_{C_{x_0 \dots x_{n-1}}}$ if $\delta>0$ was chosen small enough.
\end{proof}

\begin{prop}\label{betarightvalueofunitinterval}
Let $\beta\in (1,2)$ be such that the expansion of $1$ terminates. Then, for any $m$, $\bar p$ and $\epsilon>0$ it holds that
\[
\lim_{n\to \infty} N_\infty^s([0,1) \cap G_{\bar p}^{\beta,m}(n,\epsilon))\geq \frac{1}{2Q(s,\beta)} 
\]
for all $s$ such that $0\leq s<\dim_H(G_{\bar p}^{\beta,m})$.
\end{prop}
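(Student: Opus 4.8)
The plan is to argue by contradiction, mirroring the proof of Proposition~\ref{rightvalueofunitinterval} but with $K_f^s$ replaced by the series constant $2Q(s,\beta)$ and with Proposition~\ref{betaunitintervaltocylinders} playing the role of Proposition~\ref{unitintervaltocylinders}. Fix $s<\dim_H(G_{\bar p}^{\beta,m})$ and suppose the conclusion fails. Then there is a constant $c<1$ and a subsequence $\{M_k\}_{k=1}^\infty$ of the natural numbers with
\[
N_\infty^s([0,1)\cap G_{\bar p}^{\beta,m}(M_k,\epsilon))<\frac{c}{2Q(s,\beta)}
\]
for all $k$. Feeding this into Proposition~\ref{betaunitintervaltocylinders} yields, for every cylinder $C$ with respect to $f_\beta$, a threshold $K_C$ such that $N_\infty^s(C\cap G_{\bar p}^{\beta,m}(M_k,\epsilon/2))<c|C|^s$ whenever $k>K_C$.

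Next I would build a nested sequence of finite cylinder covers exactly as in the full-shift case. Since $s<1$, splitting a cylinder into subcylinders cannot lower its $s$-value, so the infimum defining $N_\infty^s([0,1)\cap G_{\bar p}^{\beta,m}(M_{k_1},\epsilon/2))$ is attained, up to arbitrarily small excess, by a finite cover $\{C_i^1\}$ consisting of cylinders of generation at most $M_{k_1}$ and of total value below $\frac{c}{2Q(s,\beta)}$. Choosing $k_2>\max_i K_{C_i^1}$ and applying the cylinder bound to each $C_i^1$ refines this into a finite cover of $[0,1)\cap G_{\bar p}^{\beta,m}(M_{k_1},\epsilon/2)\cap G_{\bar p}^{\beta,m}(M_{k_2},\epsilon/2)$ of total value below $\frac{c^2}{2Q(s,\beta)}$. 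Iterating and pushing the cylinder generations to infinity produces covers of
\[
\bigcap_{n=M_{k_1}}^\infty G_{\bar p}^{\beta,m}(n,\epsilon/2)
\]
by arbitrarily small cylinders with arbitrarily small total $s$-value, so this set has Hausdorff dimension at most $s$.

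Finally, since the sets $\bigcap_{n=M}^\infty G_{\bar p}^{\beta,m}(n,\epsilon/2)$ increase with $M$, every one of them is contained in $\bigcap_{n=M_{k_1}}^\infty G_{\bar p}^{\beta,m}(n,\epsilon/2)$ for $M_{k_1}$ large enough, hence all have dimension at most $s$; by countable stability of Hausdorff dimension their union $\bigcup_{M=1}^\infty\bigcap_{n=M}^\infty G_{\bar p}^{\beta,m}(n,\epsilon/2)$ again has dimension at most $s$. As $G_{\bar p}^{\beta,m}$ is contained in this union, we obtain $\dim_H(G_{\bar p}^{\beta,m})\leq s$, contradicting $s<\dim_H(G_{\bar p}^{\beta,m})$.

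I expect the only genuine difficulty to already be absorbed into Proposition~\ref{betaunitintervaltocylinders}: unlike the full-shift situation, a general $f_\beta$-cylinder need not be a scaling of $[0,1)$, and the passage from $K_f^s$ to the constant $2Q(s,\beta)$ is precisely what pays for decomposing an arbitrary cylinder into scalings of $[0,1)$ plus a negligible tail. Since that proposition is available, the present argument is essentially bookkeeping. The one point requiring a word of care is that the contradiction only controls a $\liminf$ along the subsequence $\{M_k\}$, which is enough, because a failure of the stated $\lim$ inequality is exactly what furnishes such a subsequence in the first place.
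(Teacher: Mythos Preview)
Your proposal is correct and follows exactly the route the paper takes: the paper's own proof is literally ``take the proof of Proposition~\ref{rightvalueofunitinterval}, replace $K_f^s$ by $2Q(s,\beta)$ and replace the reference to Proposition~\ref{unitintervaltocylinders} by Proposition~\ref{betaunitintervaltocylinders}'', which is precisely what you do. Your handling of the final step---using the monotonicity of $\bigcap_{n=M}^\infty G_{\bar p}^{\beta,m}(n,\epsilon/2)$ in $M$ together with countable stability of Hausdorff dimension---is in fact a bit more explicit than the paper's version.
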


\begin{proof}
Take the proof of Proposition \ref{rightvalueofunitinterval}. Replace the set $G_{\bar p}^{f,m}(M_k,\epsilon))$ by $G_{\bar p}^{\beta,m}(M_k,\epsilon)$, replace $G_{\bar p}^{f,m}$ by $G_{\bar p}^{\beta,m}$, replace $K_f$ by $2Q(s,\beta)$ and replace the reference to Proposition \ref{unitintervaltocylinders} by a reference to Proposition \ref{betaunitintervaltocylinders}.
\end{proof}

\begin{prop}\label{betarightvalueforcylinders}
Let $\beta\in (1,2)$ be such that the expansion of $1$ terminates. Then for each $m$, $\epsilon>0$, $\bar p$ and each cylinder $C\subset [0,1)$,  it holds that
\[
\lim_{n\to \infty} N_\infty^s(C \cap G_{\bar p}^{\beta,m}(n,\epsilon))\geq \frac{1}{2\beta^s Q(s,\beta)}|C|^s
\]
for all $s$ such that $0\leq s<\dim_H(G_{\bar p}^{\beta,m})$.
\end{prop}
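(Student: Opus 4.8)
The plan is to reduce to the unit-interval bound of Proposition \ref{betarightvalueofunitinterval} in essentially the same way that Proposition \ref{rightvalueforcylinders} is deduced from Proposition \ref{rightvalueofunitinterval}, the one genuinely new feature being that a cylinder $C$ with respect to $f_\beta$ need not be a scaling of $[0,1)$. To get around this I would first replace $C = C_{x_0 \dots x_{n_0-1}}$ (a cylinder of some fixed generation $n_0$, playing the role of the ``$n$'' in the statement) by a sub-cylinder that \emph{is} a scaling. By the observation recorded just before Proposition \ref{betaunitintervaltospecialcylinders}, there is a maximal $l$ with $C = C_{x_0 \dots x_{n_0-1}0^l}$, and then $C' := C_{x_0 \dots x_{n_0-1}0^{l+1}}$ is a scaling of $[0,1)$ by the factor $\beta^{-(n_0+l+1)}$ with $|C'| \geq \frac{1}{\beta}|C|$. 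Since $C' \subset C$ and $N_\infty^s$ is monotone, it is enough to bound $N_\infty^s(C' \cap G_{\bar p}^{\beta,m}(n,\epsilon))$ from below.

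Because $C'$ is an \emph{exact} scaling of $[0,1)$, the iterate $f_\beta^{n_0+l+1}$ maps $C'$ bijectively onto $[0,1)$, carrying cylinders to cylinders and multiplying every length by $|C'|^{-1}$ with no distortion. Running the cover-transport argument of Proposition \ref{betaunitintervaltospecialcylinders} in this reverse direction, any cover $(U_i)$ of $C' \cap G_{\bar p}^{\beta,m}(n,\epsilon)$ by cylinders maps to a cover $(\tilde U_i)$ with $|\tilde U_i| = |U_i|/|C'|$; moreover, deleting the first $n_0+l+1$ symbols shifts each word frequency by at most a quantity of order $(n_0+l+1)/n$, so for all sufficiently large $n$ this image covers $[0,1) \cap G_{\bar p}^{\beta,m}(n-(n_0+l+1),\epsilon/2)$. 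Taking the infimum over covers gives
\[
N_\infty^s\big(C' \cap G_{\bar p}^{\beta,m}(n,\epsilon)\big) \geq |C'|^s\, N_\infty^s\big([0,1) \cap G_{\bar p}^{\beta,m}(n-(n_0+l+1),\epsilon/2)\big).
\]

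Finally I would let $n \to \infty$ and apply Proposition \ref{betarightvalueofunitinterval} with band $\epsilon/2$, which yields $\lim_{n\to\infty} N_\infty^s([0,1) \cap G_{\bar p}^{\beta,m}(n,\epsilon/2)) \geq \frac{1}{2Q(s,\beta)}$ for every $s < \dim_H(G_{\bar p}^{\beta,m})$. Combining this with $|C'|^s \geq \beta^{-s}|C|^s$ and the monotonicity reduction from $C$ to $C'$ gives
\[
\lim_{n\to\infty} N_\infty^s\big(C \cap G_{\bar p}^{\beta,m}(n,\epsilon)\big) \geq \frac{1}{\beta^s}|C|^s \cdot \frac{1}{2Q(s,\beta)} = \frac{1}{2\beta^s Q(s,\beta)}|C|^s,
\]
as claimed; note that the factor $\beta^{-s}$ here plays exactly the role of the distortion factor $K_f^{-s}$ in the full-shift case. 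The main obstacle is thus the passage from a general cylinder to the scaling sub-cylinder $C'$, which is what forces the extra $\beta^{-s}$; the frequency bookkeeping between generations $n$ and $n-(n_0+l+1)$ is routine, since the number of stripped symbols is fixed while $n \to \infty$ and hence the induced perturbation of each $\tau_{w_j}^\beta/(n-m)$ tends to $0$.
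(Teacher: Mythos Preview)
Your proposal is correct and follows essentially the same route as the paper: pass from the given cylinder $C$ to a sub-cylinder $C'$ (the paper's $C^*$) that is an exact scaling of $[0,1)$ with $|C'|\geq \beta^{-1}|C|$, transport covers via $f_\beta^{n_0+l+1}$ to reduce to $[0,1)$, and then invoke Proposition~\ref{betarightvalueofunitinterval}. Your bookkeeping is in fact a bit cleaner than the paper's, which overloads the symbol $n$ for both the generation of $C^*$ and the running index in the limit.
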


\begin{proof}
Let $C$ be a cylinder with respect to $f_\beta$. Then it contains a cylinder $C^*$ such that $|C^*|\geq \frac{|C|}{\beta}$ and such that $C^*$ is a scaling of $[0,1)$. Let the generation of $C^*$ be $n$ and let $(U_i)_{i=1}^\infty$ be a cover of $C^*\cap G_{\bar p}^{\beta,m}(m+n,\epsilon)$. Reasoning as in the proof of Proposition \ref{unitintervaltocylinders}, replacing $K_f$ by 1, we conclude that there is a corresponding cover $(\tilde U_i)_{i=1}^\infty$ of $[0,1) \cap G_{\bar p}^{\beta,m}(m,\epsilon-\frac{n}{m})$ such that $|\tilde U_i|\leq \frac{1}{|C^*|}|U_i|$ for all $i$. Using Proposition \ref{betarightvalueofunitinterval} we get
\begin{align*}
& \lim_{n\to \infty} N_\infty^s(C \cap G_{\bar p}^{\beta,m}(n,\epsilon))\geq \lim_{n\to \infty} N_\infty^s(C^* \cap G_{\bar p}^{\beta,m}(n,\epsilon))\geq \\
&|C^*|^s\lim_{n\to \infty} N_\infty^s([0,1) \cap G_{\bar p}^{\beta,m}(n,\frac{\epsilon}{2})) \geq \frac{1}{2Q(s,\beta)}|C^*|^s \geq \frac{1}{2\beta^s Q(s,\beta)}|C|^s.
\end{align*}
\end{proof}

\begin{prop}\label{betamaintheorem}
Let $\beta\in (1,2)$ be such that the expansion of $1$ terminates and $m\in \mathbb N$. If $\bar p$ is such that there is a word $v\neq 0^m$ such that $p_v>0$, then if $0<\epsilon<p_v$, then
\[
\bigcap_{N=1}^\infty \bigcup_{n=N}^\infty G_{\bar p}^{\beta,m}(n,\epsilon)
\]
is in the class $\mathcal G^s$ for each $s\leq \dim_H(G_{\bar p}^{\beta,m})$.
\end{prop}

\begin{proof}
By Remark \ref{betagopen}
\[
\bigcap_{N=1}^\infty \bigcup_{n=N}^\infty G_{\bar p}^{\beta,m}(n,\epsilon)
\]
is a $G_\delta$ set. Using Proposition \ref{betarightvalueforcylinders}, we can now apply Lemma \ref{betaoldfalconerlemma}. By Remark \ref{dimensionequal} we get the proposition.
\end{proof}

But by Theorem \ref{intersectionproperty} the class $\mathcal G^s$ is closed under intersections so we get

\begin{prop}\label{betaaccumulationthm}
Let $\beta\in (1,2)$ be such that the expansion of $1$ terminates and $m\in \mathbb N$. If $\bar p$ is such that there is a word $v\neq 0^m$ such that $p_v>0$, then for some $K\in \mathbb N$ the set
\[
\bigcap_{k=K}^\infty \limsup_{n\to \infty} G_{\bar p}^{\beta ,m}(n,\frac{1}{k})
\]
is in the class $\mathcal G^s$ for all $s\leq \dim_H(G_{\bar p}^{\beta ,m})$. This means that the set of points $x$ for which $\bar p$ is an accumulation point in $R^{2^m}$ for $\Big(\frac{\tau_{w_i}^\beta(x,n)}{n}\Big)_{i=1}^{2^m}$ as $n\to \infty$ is in the  class $\mathcal G^s$ for all $s\leq \dim_H(G_{\bar p}^{\beta,m})$.
\end{prop}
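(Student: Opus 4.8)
The plan is to treat this as the $\beta$-shift counterpart of Proposition \ref{accumulationthm}, whose proof amounts to combining Proposition \ref{maintheorem} with the intersection-closure of $\mathcal G^s$. Here the corresponding building block is Proposition \ref{betamaintheorem}, so the argument follows the same short pattern: first establish that each individual $\limsup$ set lies in $\mathcal G^s$, then close under countable intersection.

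First I would fix $K \in \mathbb N$ large enough that $1/K < p_v$, which is possible since $p_v > 0$. For each $k \geq K$ we then have $0 < 1/k \leq 1/K < p_v$, so the hypothesis of Proposition \ref{betamaintheorem} is met with $\epsilon = 1/k$. Rewriting the $\limsup$ in its $G_\delta$ form,
\[
\limsup_{n \to \infty} G_{\bar p}^{\beta,m}\Big(n,\tfrac{1}{k}\Big) = \bigcap_{N=1}^\infty \bigcup_{n=N}^\infty G_{\bar p}^{\beta,m}\Big(n,\tfrac{1}{k}\Big),
\]
Proposition \ref{betamaintheorem} gives that each of these sets lies in $\mathcal G^s$ for every $s \leq \dim_H(G_{\bar p}^{\beta,m})$.

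Next I would invoke Theorem \ref{intersectionproperty}, which states that $\mathcal G^s$ is closed under countable intersections. Applying it to the countable family $\{\limsup_n G_{\bar p}^{\beta,m}(n,1/k)\}_{k \geq K}$ yields
\[
\bigcap_{k=K}^\infty \limsup_{n\to\infty} G_{\bar p}^{\beta,m}\Big(n,\tfrac{1}{k}\Big) \in \mathcal G^s
\]
for all $s \leq \dim_H(G_{\bar p}^{\beta,m})$, which is the first assertion.

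For the second assertion I would identify this set with the set of $x$ for which $\bar p$ is an accumulation point of the frequency vectors. A point $x$ lies in the intersection precisely when, for every $k \geq K$, there are infinitely many $n$ with the frequency vector within $1/k$ of $\bar p$. Since the sets $G_{\bar p}^{\beta,m}(n,\epsilon)$ increase with $\epsilon$ and $1/k \to 0$, given any $\epsilon > 0$ one picks $k \geq K$ with $1/k < \epsilon$ to get $\limsup_n G_{\bar p}^{\beta,m}(n,1/k) \subseteq \limsup_n G_{\bar p}^{\beta,m}(n,\epsilon)$; hence membership is equivalent to saying that for every $\epsilon > 0$ there are infinitely many $n$ with frequency vector within $\epsilon$ of $\bar p$, i.e. that $\bar p$ is an accumulation point. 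I do not expect any genuine obstacle here: every nontrivial estimate has already been absorbed into Proposition \ref{betamaintheorem} (which in turn rests on Proposition \ref{betarightvalueforcylinders} and Lemma \ref{betaoldfalconerlemma}), so the only care needed is the elementary bookkeeping of matching $\epsilon$ to $1/k$ and the choice of $K$ guaranteeing $\epsilon < p_v$.
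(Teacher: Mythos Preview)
Your proposal is correct and matches the paper's approach exactly: the paper derives this proposition directly from Proposition \ref{betamaintheorem} together with the closure of $\mathcal G^s$ under countable intersections (Theorem \ref{intersectionproperty}), and you have simply written out the details the paper leaves implicit, including the choice of $K$ with $1/K<p_v$ and the identification of the intersection with the accumulation-point set.
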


We are now ready to prove the theorems stated in the introduction.

\begin{proof}[Proof of Theorem \ref{betaextremelynonormal}]
Let $w$ be a word of length $m$. It follows from higher-dimensional multifractal analysis on subshifts of finite type (see \cite{barreira}) that there is  vector $\bar p$ such that $\dim_H(G_{\bar p}^{\beta ,m})=1$ and a vector $\bar q$ with $p_w\neq q_w$ such that $\dim_H(G_{\bar p}^{\beta ,m})$ is arbitrarily close to 1. Thus, by Proposition \ref{betaaccumulationthm} both the set $\bigcap_{k=K}^\infty \limsup_{n\to \infty} G_{\bar p}^{g,m}(n,1/k)$ and $\bigcap_{k=K}^\infty \limsup_{n\to \infty} G_{\bar q}^{g,m}(n,1/k)$ are in the class $\mathcal G^s$ for $s$ arbitrarily close to $1$. Intersecting these sets we get points for which the frequency of $w$ does not exist. By Theorem \ref{intersectionproperty} we can intersect between such sets corresponding to different words and still be in the class $\mathcal G^s$. Full dimension now follows immediately.
\end{proof}

\begin{proof}[Proof of Theorem \ref{betaintersectiontheorem}]
We first note that if for some $\bar p_i$ we have $p_{i,w}=0$ for all words $w$ except $0^{m_i}$, then 
\[
\dim_H \Big( \Big\{\, x :\bar p_i \in A^{\beta_i, m_i}(x) \, \Big\}\Big)=0,
\]
so the statement of the theorem is trivially satisfied. After noting this, the theorem follows from Proposition \ref{betaaccumulationthm} and Theorem \ref{intersectionproperty}.
\end{proof}

\end{document}